\theoremstyle{plain} 
\newtheorem{thm}{Theorem}[section]
\newtheorem*{thm*}{Theorem}
\newtheorem*{thm*conj*}{Theorem/Conjecture}
\newtheorem{prop}[thm]{Proposition}
\newtheorem{lem}[thm]{Lemma}
\newtheorem*{lem*}{Lemma}
\newtheorem{cor}[thm]{Corollary}
\theoremstyle{definition}
\newtheorem{defn}[thm]{Definition}
\newtheorem{conjecture}[thm]{Conjecture}
\newtheorem*{defn*}{Definition}
\newtheorem{rem}[thm]{Remark}
\newtheorem{ex}[thm]{Example}
\newtheorem{question}[thm]{Question}
\newtheorem*{notation*}{Notation}
\numberwithin{equation}{section}
\renewcommand{\theta}{\vartheta}
\renewcommand{\epsilon}{\varepsilon}
\renewcommand{\subset}{\subseteq}
\newcommand{\N}{\mathbb N}
\newcommand{\C}{\mathbb C}
\newcommand{\norm}[1]{\lVert {#1}\rVert}
\newcommand{\operp}{\;\raisebox{0.04cm}{\scalebox{0.72}{$\bigcirc\textnormal{\hspace{-14pt}}\perp$}}\;}
\newcommand{\QR}[2]{
\parbox[c][33pt]{0pt}{~}{\raisebox{1ex}{\ensuremath{#1}}
\ensuremath{\mkern-5mu}\Big/\ensuremath{\mkern-5mu}
\raisebox{-1ex}{\ensuremath{#2}}}}
\DeclareMathOperator{\id}{id}
\begin{document}
\title{Models of quantum permutations}
\author{Stefan Jung and Moritz Weber }
\address{Saarland University, Fachbereich Mathematik, Postfach 151150,
66041 Saarbr\"ucken, Germany}
\email{jung@math.uni-sb.de, weber@math.uni-sb.de}
\date{\today}
\subjclass[2010]{}
\keywords{$C^*$-algebras, easy quantum groups}
\thanks{Both authors were funded by the ERC Advanced Grant NCDFP, held by Roland Speicher. The second author was also funded by the SFB-TRR 195 and the DFG project \emph{Quantenautomorphismen von Graphen}. This article is part of the first author's PhD thesis. We thank Roland Speicher and Adam Skalski for pointing out further questions related to our work; see the end of the article.}

\begin{abstract}
For \(N\ge 4\) we present a series of \(^*\)-homomorphisms \(\varphi_n:C(S_N^+)\rightarrow B_n\) where \(S_N^+\) is the quantum permutation group.
They are not necessarily representations of the quantum group \(S_N^+\) but they yield good operator algebraic models of quantum permutation matrices.
The \(C^*\)-algebras \(B_n\) allow the construction of an inverse limit \(B_{\infty}\) which defines a compact matrix quantum group \(S_N\subsetneq G\subseteq S_N^+\).
We know \(G=S_N^+\) for \(N=4,5\) from Banica's work, but we have to leave open the case \(N\ge 6\).
\end{abstract}

\maketitle

\section{Introduction}
In \cite{wang1998} Sh. Wang introduced quantum versions of the classical permutation groups \(S_N\), the so-called \emph{quantum permutation groups \(S_N^+\)}, which are examples of compact matrix quantum groups in the sense of Woronowicz, see \cite{woronowiczpseudogroups}. 
The \(C^*\)-algebra \(C(S_N^+)\) is given by the universal unital \(C^*\)-algebra
\[C(S_N^+):=C^*\big(\,u_{ij}, 1\le i,j\le N\,|\,\textnormal{\(u=(u_{ij})_{1\le i,j\le N}\in M_N\big(C(S_N^+)\big)\) is a magic unitary}\,\big)\]
where a matrix is called a \emph{magic unitary} if and only if its entries are projections summing up to \(1\) in every row and column.
The name \emph{quantum permutation group} is justified by the fact, that one obtains its classical analogue, \(S_N\) (or rather \(C(S_N)\)), by adding commutativity to the generators.
\newline
In this article we are interested in models of \(C(S_N^+)\), i.e. \(^*\)-homomorphisms \linebreak
\(\varphi:C(S_N^+)\rightarrow B\) from \(C(S_N^+)\) to \(C^*\)-algebras \(B\).
We do not require \(\varphi\) to respect the comultiplication, hence we are only interested in finding ``good'' \(^*\)-homomorphisms for the \(C^*\)-algebra \(C(S_N^+)\).
This is linked to the research on matrix models in \cite{banicanechita_flatmatrixmodels} or on Hopf images in \cite{banicabichon_hopf_images}.
As \(S_N^+\) coincides with \(S_N\) if and only if \(N\le 3\) we concentrate on the situation \(N\!\ge\! 4\).
\newline
In fact, we construct a whole series of models \((\varphi_n)_{n\in\N}\) whose kernels become smaller and smaller for increasing \(n\).
As an example for \(N\!=\!4\), we consider
\[B_n:=C^*(m^{(n)}_{ij}\,|\, 1\le i,j\le 4)\subseteq (A\otimes A)^{\otimes n}\]
where
\[A:=C^*(p,q\textnormal{ projections})\]
and
\[M_n=\left(m^{(n)}_{ij}\right)\in M_4(B_n)\]
is the magic unitary constructed by
\[M_n:=M_1^{\operp n}.\]
Here,
\begin{align*}
M_1:=&R\operp R=
\scalebox{0.9}{$\begin{pmatrix}p&0&1-p&0\\1-p&0&p&0\\0&q&0&1-q\\0&1-q&0&q\end{pmatrix}\operp \begin{pmatrix}p&0&1-p&0\\1-p&0&p&0\\0&q&0&1-q\\0&1-q&0&q\end{pmatrix}
$}
\\[8pt]
=&
\scalebox{0.9}{$
\begin{pmatrix}
\parbox{14.3cm}{
\begin{tabular}{r @{\hspace{2pt}}c@{\hspace{2pt}} l @{\hspace{20pt}} r @{\hspace{2pt}}c@{\hspace{2pt}} l @{\hspace{20pt}} r @{\hspace{2pt}}c@{\hspace{2pt}} l @{\hspace{20pt}} r @{\hspace{2pt}}c@{\hspace{2pt}} l }
$p$&$\otimes$& $p$&$(1-p)$&$\otimes$& $q$&$p$&$\otimes$& $(1-p)$&$(1-p)$&$\otimes$& $(1-q)$\\[4pt]
$(1-p)$&$\otimes$& $p$&$p$&$\otimes$&$q$&$(1-p)$&$\otimes$ &$(1-p)$&$p$&$\otimes$&$(1-q)$\\[4pt]
$q$&$\otimes$& $(1-p)$&$(1-q)$&$\otimes$& $(1-q)$&$q$&$\otimes$& $p$&$(1-q)$&$\otimes$&$q$\\[4pt]
$(1-q)$&$\otimes$& $(1-p)$&$q$&$\otimes$& $(1-q)$&$(1-q)$&$\otimes$& $p$&$q$&$\otimes$& $q$
\end{tabular}
}
\end{pmatrix}
$}
\end{align*}
and \(\operp\) is defined as in \cite{woronowiczpseudogroups}.
The choice of the matrix \(R\) might be a bit surprising for the reader familiar with the canonical proof of the noncommutativity of \(C(S_4^+)\), where one usually uses the matrix \(R\) with the second and third column swapped.
However, \(R\) behaves much better under the operation \(\operp\) (see also Example \ref{ex:the_common_model_for_C(S_4^+)}), hence our choice.
\newline
We define \(\pi_{n+1,n}:B_{n+1}\rightarrow B_n\) by dividing out the relations \(p=q=1\) in the last two legs of \(B_{n+1}\subseteq (A\otimes A)^{\otimes (n+1)}\).
The construction for general \(N\) is analogous (the definition of \(M_1\) being slightly more elaborate) and we obtain the following commuting diagram.
\begin{equation*}\label{eqn:desired_diagram}
\setlength{\unitlength}{0.5cm}
\begin{picture}(10,7)
\put(2.8,6) {$\Big(C\big(S_N^+\big),u\Big)$}
\put(-7,0) {$(B_1,M_1)\overset{\pi_{2,1}}{\xlongleftarrow{\quad\quad}}\;\cdots\;
(B_n,M_{n})\,\;\overset{\pi_{n+1,n}}{\xlongleftarrow{\quad\quad}}\; (B_{n+1},M_{n+1})\;\overset{\pi_{n+2,n+1}}{\xlongleftarrow{\quad\quad}}\;\cdots$}
\put(2,5){\vector(-2,-1){7}}
\put(6.6,5){\vector(1,-2){1.8}}
\put(3.7,5){\vector(-1,-2){1.8}}
\put(-1.4,4){$\varphi_1$}
\put(0,2.5){$\cdots$}
\put(2,4){$\varphi_n$}
\put(7.4,4){$\varphi_{n+1}\quad$}
\put(9,2.5){$\cdots$}
\end{picture}
\vspace{6pt}
\end{equation*}
%
%
%
The lower row of the diagram 
is an inverse system and admits an inverse limit \((B_{\infty},M_{\infty})\).
The central result of this article is the following.

\begin{thm*}[Theorem \ref{thm:B_infty_defines_CMQG}]  \(G:=\big(B_{\infty},M_{\infty}\big)\) is a compact matrix quantum group fulfilling \(S_N\subsetneq G\subseteq S_N^+\).
\end{thm*}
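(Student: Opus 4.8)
The plan is to fuse the model maps into a single map and to read all three assertions off one ideal. Let $\Delta$ denote the comultiplication of $C(S_N^+)$. Since the diagram commutes, the family $(\varphi_n)_n$ is compatible with the connecting maps, so the universal property of the inverse limit yields a $^*$-homomorphism $\varphi_\infty\colon C(S_N^+)\to\varprojlim_n B_n$, $x\mapsto(\varphi_n(x))_n$, with $\varphi_\infty(u_{ij})=m^{(\infty)}_{ij}$. Its image is the $C^*$-algebra generated by $M_\infty=(m^{(\infty)}_{ij})$, which we take to be $B_\infty$; thus $\varphi_\infty\colon C(S_N^+)\surj B_\infty$ and, crucially, $\ker\varphi_\infty=\bigcap_n\ker\varphi_n$. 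Everything below is a statement about this ideal.

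To see that $G=(B_\infty,M_\infty)$ is a compact matrix quantum group I would verify Woronowicz's three conditions. The algebra $B_\infty$ is generated by the entries of $M_\infty$ by construction; and $M_\infty$ is a magic unitary, since each relation (entries projections, rows and columns summing to $1$) holds in every $B_n$ and hence in the coherent limit, so $M_\infty$ and $\overline{M_\infty}=M_\infty$ are unitaries in $M_N(B_\infty)$ and in particular invertible. The heart of the matter is the comultiplication, which I would obtain by descending the composite $\Phi:=(\varphi_\infty\otimes\varphi_\infty)\circ\Delta\colon C(S_N^+)\to B_\infty\otimes B_\infty$, sending $u_{ij}\mapsto\sum_k m^{(\infty)}_{ik}\otimes m^{(\infty)}_{kj}$, through $\varphi_\infty$; this needs $\ker\varphi_\infty\subseteq\ker\Phi$. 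The decisive input is the identity $M_n\operp M_m=M_{n+m}$, coming from $M_n=M_1^{\operp n}$ and the associativity of $\operp$. Comparing on the generators $u_{ij}$ it gives, for all $n,m$,
\[(\varphi_n\otimes\varphi_m)\circ\Delta=\iota_{n,m}\circ\varphi_{n+m},\]
where $\iota_{n,m}\colon B_{n+m}\inj B_n\otimes B_m$ is the natural inclusion. Hence for $x\in\ker\varphi_\infty=\bigcap_k\ker\varphi_k$ and the limit projections $p_n\colon B_\infty\to B_n$ one gets $(p_n\otimes p_m)\Phi(x)=(\varphi_n\otimes\varphi_m)(\Delta x)=\iota_{n,m}(\varphi_{n+m}(x))=0$. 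The main technical obstacle is to pass from this to $\Phi(x)=0$, i.e. that the $p_n\otimes p_m$ are jointly injective on $B_\infty\otimes B_\infty$. Working with the minimal tensor product I would argue that $B_\infty\inj\prod_nB_n$ isometrically, that $\otimes_{\min}$ preserves this injection, and that $(\prod_nB_n)\otimes_{\min}(\prod_mB_m)\inj\prod_{n,m}(B_n\otimes_{\min}B_m)$ (realise all factors on Hilbert spaces and use that the minimal tensor product of faithful representations is faithful); composing gives the joint injectivity. The descended map $\Delta_\infty$ then satisfies $\Delta_\infty(m^{(\infty)}_{ij})=\sum_k m^{(\infty)}_{ik}\otimes m^{(\infty)}_{kj}$; coassociativity follows from that of $\Delta$ because $\varphi_\infty$ is surjective. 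Since $\Delta_\infty\circ\varphi_\infty=(\varphi_\infty\otimes\varphi_\infty)\circ\Delta$ by construction, $\varphi_\infty$ is a surjective morphism of compact matrix quantum groups, so $G\subseteq S_N^+$.

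For $S_N\subseteq G$ it suffices to show $\ker\varphi_\infty\subseteq\ker(\mathrm{ab})$, where $\mathrm{ab}\colon C(S_N^+)\surj C(S_N)$ is the abelianisation, so that $\mathrm{ab}$ factors through $\varphi_\infty$. As $\ker(\mathrm{ab})=\bigcap_{\sigma\in S_N}\ker(\ev_\sigma\circ\,\mathrm{ab})$, I would produce for each $\sigma$ an $n$ and a character $\chi\colon B_n\to\C$ with $\chi\circ\varphi_n=\ev_\sigma\circ\,\mathrm{ab}$; then any $x\in\ker\varphi_\infty\subseteq\ker\varphi_n$ satisfies $\ev_\sigma(\mathrm{ab}(x))=\chi(\varphi_n(x))=0$. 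Such characters come from the four characters of $A=C^*(p,q)$ (sending $p,q$ into $\{0,1\}$): each turns $R$ into a permutation matrix, so a product character of $(A\otimes A)^{\otimes n}$ turns $M_n=M_1^{\operp n}$ into a product of the corresponding permutation matrices. It remains to check that the matrices obtained from $R$ generate $S_N$, so that every $\sigma$ is realised for a suitable $n$; for $N=4$ the four characters give $(2\,3),(2\,3\,4),(1\,3\,2),(1\,3\,4\,2)$, which already generate $S_4$, and the analogous (more elaborate) verification is exactly what the general choice of $R$ is designed to pass.

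Finally, strictness $S_N\subsetneq G$ amounts to $\ker\varphi_\infty\subsetneq\ker(\mathrm{ab})$, i.e. exhibiting a commutator surviving some $\varphi_n$. Already $M_1$ is noncommutative: in $B_1\subseteq A\otimes A$ one computes $[m^{(1)}_{11},m^{(1)}_{33}]=[\,p\otimes p,\,q\otimes p\,]=(pq-qp)\otimes p\neq 0$, since $p,q$ do not commute in $A$. Thus $c:=[u_{11},u_{33}]$ lies in $\ker(\mathrm{ab})$ but $\varphi_1(c)\neq 0$, so $c\notin\ker\varphi_\infty$; hence $B_\infty\surj C(S_N)$ is not injective and $G\neq S_N$. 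Combining the three parts gives $S_N\subsetneq G\subseteq S_N^+$. I expect the genuine difficulty to sit in the construction of $\Delta_\infty$ — specifically the interchange of the tensor product with the inverse limit — while the remaining assertions are bookkeeping around the kernel $\bigcap_n\ker\varphi_n$.
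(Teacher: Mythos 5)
Your proof is correct, and while it rests on the same two pillars as the paper's argument, it reaches them by noticeably different routes. The shared skeleton is (i) the identity $M_n\operp M_m=M_{n+m}$, which you use as $(\varphi_n\otimes\varphi_m)\circ\Delta=\iota_{n,m}\circ\varphi_{n+m}$ and which is exactly the paper's Lemma \ref{lem:(phi_n_otimes_phi_n)_circ_Delta_gives_entries_of_M_2n}, and (ii) the fact that the maps into the finite stages separate points of $B_\infty\otimes B_\infty$. For (ii) the paper proves that $g=\sup_n\norm{(\phi_n\otimes\phi_n)(\cdot)}$ is a $C^*$-norm on the algebraic tensor product via an induction on linearly independent families (Lemmas \ref{lem:linear_independence_is_reached_before_limit}--\ref{lem:g_equals_the_norm_on_B_infty_otimes_b_infty}), whereas you embed $B_\infty\inj\prod_n B_n$ isometrically and use faithfulness of minimal tensor products of faithful representations to get joint injectivity of the $p_n\otimes p_m$ on the completed tensor product directly; your argument is cleaner and avoids the linear-independence induction entirely. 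You then phrase the existence of $\Delta_\infty$ as a factorization through $\ker\varphi_\infty=\bigcap_n\ker\varphi_n$, where the paper proves the norm inequality $\norm{P(M_\infty)}\ge\norm{P(M_\infty\operp M_\infty)}$ on polynomials -- these are equivalent, but your kernel formulation also packages $S_N\subseteq G$ and $S_N\subsetneq G$ uniformly. For $S_N\subseteq G$ the paper instead builds the arrow $\phi:B_1\to C(S_N)$ once and for all (Lemma \ref{lem:proving_all_properties_of_(B_1,M_1)}) by abelianizing $B_1$ and counting dimension $N!$ via leg-wise characters; your evaluation-character argument uses the same characters of $A$ but organized through $\ker(\mathrm{ab})=\bigcap_\sigma\ker(\ev_\sigma\circ\mathrm{ab})$. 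One small presentational caveat: your explicit computations (the four permutations realized by $R$, and the commutator $[p\otimes p,q\otimes p]$) refer to the introduction's $N=4$ matrix $M_1=R\operp R$ rather than the matrix of Definition \ref{defn:(B_1,M_1)} actually used in the theorem; the analogous checks for the general $M_1$ are equally easy (and for noncommutativity the paper itself is terser than you are), but they should be stated for the right matrix.
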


In the situation of \(N\in\{4,5\}\) it follows from the maximality of the inclusion \(S_N\subseteq S_N^+\) proved in \cite{banicauniformquantumgroups} that the constructed inverse limit is equal to \(S_N^+\).
We have to leave it open whether we have \(G=S_N^+\) for \(N\ge 6\).
\newline
Our results may be interpreted in the sense that Woronowicz's operation \(\operp\) applied iteratively to the representation \(R\) as above is powerful enough to finally reproduce all of \(C(S_N^+)\), at least in the cases \(N=4\) and \(N=5\).
Hence, \(R^{\operp n}\) yields good models of quantum permutation matrices for practical purposes such as in \cite{lupini_etall}, see also Section \ref{subsec:more_models_of_C(S_4^+)}.
\newline
In Section 5, we comment on how to generalize the presented ideas and results in the situation of easy quantum groups, of which the (quantum) permutation groups are special cases.
We show that the construction of an inverse system and the corresponding inverse limit as above can be performed whenever a suitable starting pair \((B_1,M_1)\) is given.
\section{Preliminaries}
In this section we define compact matrix quantum groups and quantum permutation groups \(S_N^+\).
Throughout this work, let \(\otimes\) denote the minimal tensor product of \(C^*\)-algebras and we write \([n]\) for the set of natural numbers \(\{1,\ldots,n\}\).
\subsection{The categories \(\mathcal{C}_N\) and compact matrix quantum groups}
\begin{defn}\label{defn:category_C_N}
Consider for given \(N\in\N\) the category \(\mathcal{C}_N\) whose objects are pairs \((D,M)\) where 
\begin{itemize}
\item 
\(D\) is a \(C^*\)-algebra,
\item
\(M\) is an \(N\!\times\!N\)-matrix over \(D\), i.e. \(M\in M_N(D)=M_N(\C)\otimes D\) and
\item the \(C^*\)-algebra \(D\) is generated by the \(N^2\) entries of \(M\).
\end{itemize}
Arrows in \(\mathcal{C}_N\) between objects \((D_1,M_1)\) and \((D_2,M_2)\) are \(^*\)-homomorphisms \(\varphi\) sending the entries of \(M_1\) canonically onto the entries of \(M_2\), i.e.
\[(\id\otimes \varphi)M_1=M_2.\]
\end{defn}
In \cite{woronowiczpseudogroups} Woronowicz defined \(C^*\)-algebraic compact matrix quantum groups.
\begin{defn}\label{defn:CMQGs}
Let \(N\in \N\) and let \((A,u)\) be an object in the category \(\mathcal{C}_N\) such that \(u\) is a unitary and \(u^{(*)}:=\big(u_{ij}^*\big) \) is invertible.
Assume that there exists a unital \(^*\)-homomorphism \(\Delta:A\rightarrow A\otimes A\) (called comultiplication on \(A\)) that fulfils
\begin{equation}\label{eqn:structure_of_comultiplication}
\Delta(u_{ij})=\sum_{k=1}^{N}u_{ik}\otimes u_{kj}\quad\quad \forall 1\le i,j\le N.
\end{equation}
Then we denote \(A\) also by \(C(G)\), \(u\) by \(u_G\) and \(G\!:=\!\big(C(G),u_G\big)\) is a \emph{compact matrix quantum group of size \(N\)}.
\end{defn}
Compact matrix quantum groups are generalizations of (unitary) compact matrix groups, compare \cite[Prop. 6.1.10]{timmermann}.
\subsection{Quantum permutation groups}
We now come to the definition of the objects of interest in this work, the quantum permutation groups as defined by Wang in \cite{wang1998}.
\begin{defn}\label{defn:S_N^+}
\begin{itemize}
\item[(a)]
Given for some \(N\in\N\) a matrix \(u\!=\!(u_{ij})_{1\le i,j\le N}\) with entries in some unital \(^*\)-algebra, we call \(u\) a \emph{magic unitary} if its entries are projections (i.e. \(u_{ij}=u_{ij}^*=u_{ij}^2\)) that sum up to \(1\) in every row and column.
\item[(b)]
Let \(N\in\N\) and \(u\) be an \(N\!\times\!N\)-matrix of generators.
Define
\[A:=C^*\big(\,u_{ij},1\le i,j\le N\,|\, u\textnormal{ is a magic unitary}\,\big).\]
Then we call the compact matrix quantum group \(S_N^+:=(A,u)\) the \emph{quantum permutation group} and write \(A=C(S_N^+)\).
\end{itemize}
\end{defn}
\begin{rem}\label{rem:S_N^+_becomes_S_N_when_adding_commutativity}
When we divide out in \(A\!=\!C(S_N^+)\) the commutativity relations, we obtain \(S_N\), seen as the compact matrix quantum group \(S_N=\big(C(S_N),u_{S_N}\big)\).
\end{rem}
\begin{defn}\label{defn:subgrouprelation}
Given an object \(G=(A,u)\) in \(\mathcal{C}_N\) that is a compact matrix quantum group, we say that it fulfils the \emph{subgroup relation} \(S_N\subseteq G\subseteq S_N^+\) if and only if there exists a diagram
\[\big(C(S_N^+),u_{S_N^+}\big)\longrightarrow\big(A,u\big)\longrightarrow\big(C(S_N),u_{S_N}\big).\]
We write in addition \(S_N\subsetneq G\) if the arrow from \(\big(A,u\big)\) to \(\big(C(S_N),u_{S_N}\big)\) is not injective (i.e. not invertible).
\end{defn}
For \(N\le 3\) the compact matrix quantum groups \(S_N^+\) and \(S_N\) coincide.
For \(N\!\ge\!4\) however, we have \(S_N\subsetneq S_N^+\) in the sense that there is a non-injective \(^*\)-homomorphism \(C(S_N^+)\rightarrow C(S_N)\) sending generators to generators.
Recall that \(S_N^+\) is not a group in this case, i.e. \(C(S_N^+)\) is a noncommutative \(C^*\)-algebra.
Indeed, \(\phi:C(S_4^+)\rightarrow C^*(p,q\textnormal{ projections})\) mapping \(u\) to
\[R:=\begin{pmatrix}p&0&1-p&0\\1-p&0&p&0\\0&q&0&1-q\\0&1-q&0&q\end{pmatrix}\]
is a surjective \(^*\)-homomorphism onto a noncommutative \(C^*\)-algebra.
Usually, one uses a variant of \(R\) where the second and third columns are swapped, but we prefer this matrix \(R\) for later purposes.

\pagebreak

\section{A sequence of models for \(C(S_N^+)\)}
\label{sec:a_sequence_of_models}
For the rest of this article let \(N\in\N_{\ge4}\).
\subsection{The \(\;\raisebox{0.04cm}{$\bigcirc\textnormal{\hspace{-14pt}}\perp$}\;\)-product}
\label{subsec:the_operp-product}
We start by defining the so-called \(\operp\)-product of matrices, compare \cite{woronowiczpseudogroups}.
\begin{defn}
Given two matrices \(M_1\in M_N(A)\) and \(M_2\in M_N(B)\) for two \(C^*\)-algebras \(A\) and \(B\), we define the matrix
\[M_1\operp M_2:=\sum_{i,j=1}^{N}E_{ij}\otimes\left(\sum_{k=1}^{N} m^{(1)}_{ik}\otimes  m^{(2)}_{kj}\right),\]
an \(N\!\times\! N\)-matrix with entries in \(A\otimes B\) (or any suitable \(C^*\)-subalgebra of it).
\end{defn}
\begin{lem}\label{lem:operp-product_gives_again_magic_unitary}
If \(M_1\) and \(M_2\) are magic unitaries, so is \(M_1\operp M_2\).
\end{lem}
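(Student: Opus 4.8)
The plan is to verify directly the two defining properties of a magic unitary for the entries
\[
w_{ij}:=\sum_{k=1}^{N} m^{(1)}_{ik}\otimes m^{(2)}_{kj}
\]
of \(M_1\operp M_2\): namely that each \(w_{ij}\) is a projection, and that the \(w_{ij}\) sum to \(1\) along every row and along every column. Self-adjointness is immediate, since each tensor factor \(m^{(1)}_{ik}\otimes m^{(2)}_{kj}\) is a product of projections in the two commuting tensor legs and is therefore itself a projection, so \(w_{ij}^*=w_{ij}\).

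The one substantive ingredient I would isolate first is the elementary fact that projections \(p_1,\dots,p_N\) summing to \(1\) are automatically pairwise orthogonal: from \(p_l=p_l\bigl(\sum_k p_k\bigr)p_l=p_l+\sum_{k\ne l}p_l p_k p_l\) and the positivity of each \(p_l p_k p_l\) one gets \(p_l p_k p_l=0\), hence \(p_k p_l=0\) for \(k\ne l\). Applying this to the \(i\)-th row of the magic unitary \(M_1\) yields \(m^{(1)}_{ik}m^{(1)}_{il}=\delta_{kl}\,m^{(1)}_{ik}\), and applying it to the \(j\)-th column of \(M_2\) yields \(m^{(2)}_{kj}m^{(2)}_{lj}=\delta_{kl}\,m^{(2)}_{kj}\).

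With these orthogonality relations in hand, idempotency is a short computation. Expanding
\[
w_{ij}^2=\sum_{k,l}\bigl(m^{(1)}_{ik}m^{(1)}_{il}\bigr)\otimes\bigl(m^{(2)}_{kj}m^{(2)}_{lj}\bigr)
\]
and substituting the two relations, only the diagonal terms \(k=l\) survive, so \(w_{ij}^2=\sum_k m^{(1)}_{ik}\otimes m^{(2)}_{kj}=w_{ij}\). Thus every \(w_{ij}\) is a projection. This idempotency step is the place where the magic-unitary hypothesis on \(M_1\) and \(M_2\) is genuinely used, and I expect it to be the only point requiring any real argument; the rest is bookkeeping.

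Finally the row and column sums follow by factoring the tensor products and using that \(M_1\) and \(M_2\) are themselves magic unitaries. For a fixed row \(i\) one has \(\sum_j w_{ij}=\sum_k m^{(1)}_{ik}\otimes\bigl(\sum_j m^{(2)}_{kj}\bigr)=\sum_k m^{(1)}_{ik}\otimes 1=1\otimes 1\), and symmetrically for a fixed column \(j\) one has \(\sum_i w_{ij}=\sum_k\bigl(\sum_i m^{(1)}_{ik}\bigr)\otimes m^{(2)}_{kj}=\sum_k 1\otimes m^{(2)}_{kj}=1\otimes 1\). Having checked that the entries are projections and that they sum to the unit in each row and each column, we conclude that \(M_1\operp M_2\) is a magic unitary.
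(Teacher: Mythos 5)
Your proof is correct and is precisely the direct verification that the paper leaves to the reader (its proof of this lemma is literally the single word ``Straightforward''). The orthogonality of projections summing to $1$, the resulting idempotency of each entry, and the row/column sum computations are all as intended.
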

\begin{proof}
Straightforward.
\end{proof}
\subsection{The matrix \(M_1\)}
\label{subsec:the_matrix_M_1}
\begin{defn}\label{defn:R_(a,b),(c,d)}
\begin{itemize}
\item[(a)]
We define the \(C^*\)-algebra \(A\) by
\begin{equation}\label{eqn:defn_of_A}
A:=C^*\big( p,q,1\,|\, p,q,1\textnormal{ projections},\;1p=p1=p\,,\,q1=1q=q \big),
\end{equation}
the universal unital \(C^*\)-algebra generated by two projections.
\item[(b)] Consider \(N\!\ge 4\) and let \(1\le a,b,c,d\le N\) be pairwise different.
We define the matrix
\[R_{(a,b),(c,d)}\in M_N(A)\]
by the following properties:
\begin{itemize}
\item[(1)] The entries \((a,a)\) and \((b,b)\) are given by \(p\).
\item[(2)] The entries \((c,c)\) and \((d,d)\) are given by \(q\).
\item[(3)] All other diagonal entries are equal to \(1\).
\item[(4)] The entries \((a,b)\) and \((b,a)\) are given by \(1\!-\!p\).
\item[(5)] The entries \((c,d)\) and \((d,c)\) are given by \(1\!-\!q\).
\item[(6)] All other off-diagonal entries are zero.
\end{itemize}
\end{itemize}
\end{defn}
\begin{ex}
In the case \(N\!=\!5\) only one entry in a matrix \(R_{(a,b),(c,d)}\) is equal to \(1\).
For example we have
\[
R_{(1,4),(3,5)}=
\begin{pmatrix}
p&0&0&1\!-\!p&0\\
0&1&0&0&0\\
0&0&q&0&1\!-\!q\\
1\!-\!p&0&0&p&0\\
0&0&1\!-\!q&0&q
\end{pmatrix}\in M_5(A).
\]
\end{ex}
\begin{defn}\label{defn:(B_1,M_1)}
Let \(L\) be a natural number such that each permutation \(\sigma\in S_N\) can be written as a product of at most \(L\) transpositions.
For \(1\le a,b\le  N\) let \(R_{(a,b),*}\) be a matrix \(R_{(a,b),(c,d)}\) where \(1\le c,d\le N\) are chosen such that \(a,b,c,d\) are pairwise different.
We define the object \((B_1,M_1)\) in the category \(\mathcal{C}_N\) by
\[M_1=\left(m^{(1)}_{ij}\right)_{1\le i,j\le N}:=\left(\underset{\scalebox{0.68}{\(\begin{matrix}\\[-10pt]1\!\le\! a\!<\!b\!\le\!N\end{matrix}\)}}{\scalebox{2}{$\operp$}} R_{(a,b),*}\right)^{\operp L}\]
and
\[B_1:=C^*\big(\,m^{(1)}_{ij}\;|\;1\le i,j\le N\big)\subseteq A^{\otimes\frac{LN(N-1)}{2}}.\]
\end{defn}
\begin{lem}\label{lem:proving_all_properties_of_(B_1,M_1)}
Consider the object \((B_1,M_1)\) in \(\mathcal{C}_N\) from Definition \ref{defn:(B_1,M_1)}.
There exists a diagram of the form
\[(C(S_N^+),u_{S_N^+})\overset{\varphi_1}\longrightarrow (B_1,M_1)\overset{\phi}\longrightarrow (C(S_N),u_{S_N}).\]
\end{lem}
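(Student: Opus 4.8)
The plan is to construct the two arrows separately, since they are of entirely different nature: $\varphi_1$ comes for free from universality, whereas $\phi$ needs a short combinatorial argument. For $\varphi_1$ I would first check that each building block $R_{(a,b),*}=R_{(a,b),(c,d)}$ is a magic unitary. Its entries lie in $\{p,q,1-p,1-q,0,1\}$, all projections, and reading off Definition~\ref{defn:R_(a,b),(c,d)} one sees that row $a$ (resp.\ $b$) contains $p$ and $1-p$, row $c$ (resp.\ $d$) contains $q$ and $1-q$, and every other row a single $1$; the same holds for columns, so each row and column sums to $1$. By Lemma~\ref{lem:operp-product_gives_again_magic_unitary} the operation $\operp$ preserves magic unitaries, hence $M_1$ is a magic unitary in $B_1$, and the universal property of $C(S_N^+)$ produces a unital $^*$-homomorphism $\varphi_1$ with $\varphi_1(u_{ij})=m^{(1)}_{ij}$, which is by definition an arrow in $\mathcal{C}_N$.

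For $\phi$ I would exploit that $C(S_N)\cong\C^{S_N}$, so that giving $\phi$ amounts to giving a family of characters $\chi_\sigma:=\ev_\sigma\circ\phi$ of $B_1$, one for each $\sigma\in S_N$, and the arrow condition $(\id\otimes\phi)M_1=u_{S_N}$ unwinds entrywise to $(\id\otimes\chi_\sigma)M_1=P_\sigma$ for all $\sigma$, where $P_\sigma$ is the permutation matrix with $(P_\sigma)_{ij}=\delta_{i,\sigma(j)}$. Thus it suffices to realise each $P_\sigma$ by a character of $B_1$. Now every character of $A$ sends $p,q$ to projections in $\C$, i.e.\ to values in $\{0,1\}$, so a character of $A^{\otimes m}$ with $m=\tfrac{LN(N-1)}{2}$ is merely a choice of $(p,q)\in\{0,1\}^2$ in each leg, and its restriction to $B_1$ is a character of $B_1$. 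The decisive observations are that evaluating $R_{(a,b),(c,d)}$ at such a character yields the identity, the transposition $(a\,b)$, the transposition $(c\,d)$, or their product according to the four values of $(p,q)$, and that under any character $\operp$ collapses to ordinary multiplication of the resulting scalar permutation matrices.

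With this the construction is combinatorial. Since the inner product $\operp_{a<b}R_{(a,b),*}$ ranges over all pairs, I can realise an arbitrary transposition $(a\,b)$ from one such block by setting the $(a,b)$-leg to $(p,q)=(0,1)$ and all other legs to $(1,1)$, so that the product over all pairs equals $P_{(a\,b)}$. Forming the $\operp$-power of length $L$ then realises any product of at most $L$ transpositions, which by the defining property of $L$ in Definition~\ref{defn:(B_1,M_1)} exhausts $S_N$: given $\sigma=\tau_1\cdots\tau_k$ with $k\le L$, I let the $i$-th copy realise $\tau_i$ for $i\le k$ and the identity otherwise, obtaining $\chi_\sigma$ with $(\id\otimes\chi_\sigma)M_1=P_{\tau_1}\cdots P_{\tau_k}=P_\sigma$. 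Setting $\phi:=(\chi_\sigma)_{\sigma\in S_N}$ gives the arrow, since then $\phi(m^{(1)}_{ij})$ is the function $\sigma\mapsto\delta_{i,\sigma(j)}$, which is exactly $(u_{S_N})_{ij}$. I expect the first arrow to be immediate and the second to carry all the work; the main obstacle is the bookkeeping in $\phi$, namely verifying cleanly that $\operp$ descends to matrix multiplication under a character and keeping the conventions for $P_\sigma$, the order of the transposition product, and the role of $L$ mutually consistent, so that the realised matrix is $P_\sigma$ and not $P_{\sigma^{-1}}$ or a reordering.
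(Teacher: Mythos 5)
Your proof is correct, and its computational heart coincides with the paper's: both arguments evaluate $p,q$ at $\{0,1\}$ leg by leg so that each factor $R_{(a,b),*}$ collapses to the identity or to the transposition matrix $(a\,b)$, observe that $\operp$ becomes ordinary matrix multiplication under such evaluations, and then use the defining property of $L$ to realise every permutation. Where you differ is in how these scalar evaluations are assembled into the arrow $\phi$. The paper first passes to the abelianization $B_1'$ of $B_1$ (dividing out $pq=qp$ in every leg), obtains a surjection $\rho:C(S_N)\to B_1'$ from the universal property of $C(S_N)$, and then inverts $\rho$ by showing that the $N!$ projections $\phi'(m_\sigma)$ are all nonzero (hence linearly independent), so that $\dim B_1'=N!$; the characters only enter as witnesses for $\phi'(m_\sigma)\neq 0$, and $\phi$ is defined as $\rho^{-1}\circ\phi'$. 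You instead use $C(S_N)\cong\C^{S_N}$ to build $\phi$ directly as the tuple of characters $(\chi_\sigma)_{\sigma\in S_N}$ and verify $(\id\otimes\phi)M_1=u_{S_N}$ entrywise; this bypasses the abelianization, the universal property of $C(S_N)$, and the dimension count altogether, at the price of having to fix the convention $(P_\sigma)_{ij}=\delta_{i,\sigma(j)}$ carefully so that $P_{\tau_1}\cdots P_{\tau_k}=P_\sigma$ and the realised matrix is indeed $P_\sigma$ rather than $P_{\sigma^{-1}}$ --- a point you flag and which is harmless, since relabelling $\chi_\sigma\leftrightarrow\chi_{\sigma^{-1}}$ repairs any mismatch. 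Your route is the more elementary and self-contained of the two; the paper's route has the mild advantage of isolating the reusable fact that $B_1'$ \emph{is} $C(S_N)$, which it quotes again in Remark \ref{rem:existence_of_arrow_phi_n}.
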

\begin{proof}
The existence of \(\varphi_1\) is by Lemma \ref{lem:operp-product_gives_again_magic_unitary}.
In order to prove existence of the arrow \((B_1,M_1)\overset{\phi}\longrightarrow (C(S_N),u_{S_N})\), we start with the matrices \(R_{(a,b),*}\), the matrix \(M_1\) and the \(C^*\)-algebra \(B_1\).
Dividing out in all appearing legs the commutativity relations \(pq=qp\), we obtain matrices \(R'_{(a,b),*}\), a matrix \(M'_1\) and a commutative \(C^*\)-algebra \(B'_1\).
The corresponding quotient map \(\phi':B_1\rightarrow B'_1\) is an arrow 
\[(B_1,M_1)\overset{\phi'}{\longrightarrow}(B'_1,M'_1)\]
and \(M'_1\) is magic.
By Remark \ref{rem:S_N^+_becomes_S_N_when_adding_commutativity}, we have an arrow 
\[(C(S_N),u_{S_N})\overset{\rho}{\longrightarrow}(B'_1,M'_1)\]
because \(M'_1\) is magic and its entries pairwisely commute.
It remains to prove the following claim:
\[(*)\quad\forall \sigma\in S_N:\quad \phi'(m_{\sigma})\neq 0\quad\textnormal{ where }m_{\sigma}:=m^{(1)}_{1\sigma(1)}m^{(1)}_{2\sigma(2)}\cdot\ldots\cdot m^{(1)}_{n\sigma(n)}\in B_1.\]
This shows that the vector space dimension of \(\phi'(B_1)\) is \(N!\!=\!\dim\big(C(S_N)\big)\) and the arrow \(\rho\) above is invertible.
The composition \(\phi:=\rho^{-1}\circ \phi'\) is the desired arrow from \((B_1,M_1)\) to \(\big(C(S_N),u_{S_N}\big)\).
\newline
In order to prove the statement \((*)\) for given \(\sigma\in S_N\), we define
\[m'_{\sigma}:=\phi'(m_\sigma)\]
and observe that it suffices to construct a \(^*\)-homomorphism
\[\mu:B'_1\longrightarrow \C\]
that fulfils \(\mu(m'_ {\sigma})=1\).
We write \(\sigma^{-1}\) as a product of \(l\) transpositions \(\tau_{\alpha,\beta}=(\alpha,\beta)\in S_N\) with \(\alpha\!<\!\beta\) such that \(l\) is as small as possible:
\begin{equation}\label{eqn:sigma_as_product_of_transpositions}
\sigma^{-1}=\tau_{\alpha_1,\beta_1}\tau_{\alpha_2,\beta_2}\cdots\tau_{\alpha_l,\beta_l}=(\alpha_1,\beta_1)(\alpha_2,\beta_2)\cdots(\alpha_l,\beta_l)
\end{equation}
By definition of \(M_1\) and \(M'_1\), it holds
\[M'_1=\left(\underset{\scalebox{0.68}{\(\begin{matrix}\\[-10pt]1\!\le\! a\!<\!b\!\le\!N\end{matrix}\)}}{\scalebox{2}{$\operp$}} R'_{(a,b),*}\right)^{\operp L}\]
with \(L\ge l\).
Writing out this \(\operp\)-product, we see that \(M'_1\) is of the form
\begin{equation}\label{eqn:writing_out_M'_1}
M'_1:=\ldots\operp R'_{(\alpha_1,\beta_1),*}\operp\ldots\operp R'_{(\alpha_2,\beta_2),*}\operp\ldots\quad\ldots\operp R'_{(\alpha_l,\beta_l),*}\operp\ldots,
\end{equation}
i.e. we find, among other \(\operp\)-factors, matrices \(R'_{(\alpha_1,\beta_1),*}\), \(R'_{(\alpha_2,\beta_2),*}\), \(\ldots\),\(R'_{(\alpha_l,\beta_l),*}\) that appear  from left to right in this order.
Let's say these matrices appear in the \(\operp\)-product from Equation \ref{eqn:writing_out_M'_1} at positions \(k_1,\ldots,k_l\).
Define a quotient map \(\mu\) on \(B'_1\) in the following way:
\begin{itemize}
\item[(i)] In each of the legs \(k_1,\ldots,k_l\) we apply a quotient map
\(\mu_1\) that divides out exactly the relation \(1\!-\!p\!=\!q\!=\!\mathds{1}\). Note that we have
\[(1\otimes \mu_1)(R'_{(a,b),*})=\tau_{a,b}.\]
\item[(ii)] In each of the remaining legs we apply a quotient map \(\mu_0\) that divides out exactly the relations \(p\!=\!q\!=\!1\).
Note that we have
\[(1\otimes \mu_0)(R'_{(a,b),*})=\mathds{1}_{S_N}.\]
\end{itemize}
From these observations we directly deduce 
\[(1\otimes\mu)M'_1=\tau_{\alpha_1,\beta_1}\tau_{\alpha_2,\beta_2}\cdots\tau_{\alpha_l,\beta_l}=\sigma^{-1}.\]
Recall that a permutation matrix \(\sigma\) fulfils
\[\sigma_{ij}=\delta_{i,\sigma(j)},\]
so it holds
\[(\sigma^{-1})_{i\sigma(i)}=\delta_{i,(\sigma^{-1}\circ\sigma)(i)}=1.\]
We conclude
\[\mu\big(m'_\sigma\big)=(\sigma^{-1})_{1\sigma(1)}\cdots (\sigma^{-1})_{N\sigma(N)}=1,\]
and thus \(m'_\sigma=\phi'(m_\sigma)\) is non-zero, proving \((*)\).
\end{proof}
\begin{rem}\label{rem:existence_of_nu}
Obviously, there is an arrow 
\[\big(C(S_N),u_{S_N}\big)\overset{\nu'}{\longrightarrow}\big(\C,\mathds{1}_{M_N(\C)}\big)\]
because \(\mathds{1}_{M_N(\C)}\) is a magic unitary with commuting entries.
The composition \(\nu:=\nu'\circ\phi\) with \(\phi\) from Lemma \ref{lem:proving_all_properties_of_(B_1,M_1)} is an arrow
\[(B_1,M_1)\overset{\nu}\longrightarrow (\C,\mathds{1}_{M_N(\C)}).\]
\end{rem}
\subsection{The matrices \(M_n\)}
\label{subsec:the_matrices_M_n}
\begin{defn}\label{defn:M_n_and_B_n}
Consider the object \((B_1,M_1)\) in \(\mathcal{C}_N\) from Definition \ref{defn:(B_1,M_1)}.
Define 
\[M_n=\left(m^{(n)}_{ij}\right)_{1\le i,j\le N}:=M_1^{\operp n}\]
and
\[B_n:=C^*(\,m^{(n)}_{ij}\,|\,1\le i,j\le N)\subseteq B_1^{\otimes n}.\]
\end{defn}
By Remark  \ref{rem:existence_of_nu}, we have for every \(n\in\N\) an arrow
\[(B_{n+1},M_{n+1})\overset{\pi_{n+1,n}}{\longrightarrow}(B_n,M_n)\]
given by restricting \((\id_{B_1})^{\otimes n}\otimes \nu\) to \(B_{n+1}\).
We obtain a commuting diagram of the form
\begin{equation*}\label{eqn*:desired_diagram}
\setlength{\unitlength}{0.5cm}
\begin{picture}(10,7)
\put(2.8,6) {$\Big(C\big(S_N^+\big),u_{S_N^+}\Big)$}
\put(-7,0) {$(B_1,M_1)\overset{\pi_{2,1}}{\xlongleftarrow{\quad\quad}}\;\cdots\;
(B_n,M_{n})\,\;\overset{\pi_{n+1,n}}{\xlongleftarrow{\quad\quad}}\; (B_{n+1},M_{n+1})\;\overset{\pi_{n+2,n+1}}{\xlongleftarrow{\quad\quad}}\;\cdots$}
\put(2,5){\vector(-2,-1){7}}
\put(6.6,5){\vector(1,-2){1.8}}
\put(3.7,5){\vector(-1,-2){1.8}}
\put(-1.4,4){$\varphi_1$}
\put(0,2.5){$\cdots$}
\put(2,4){$\varphi_n$}
\put(7.4,4){$\varphi_{n+1}\quad$}
\put(9,2.5){$\cdots$}
\end{picture}
\vspace{6pt}
\end{equation*}
In particular, by Lemma \ref{lem:operp-product_gives_again_magic_unitary}, every pair \((B_n,M_n)\) defines a model of \(C(S_N^+)\).
\begin{rem}\label{rem:existence_of_arrow_phi_n}
Considering \(\phi_1:B_1\rightarrow C(S_N)\) as described in Lemma \ref{lem:proving_all_properties_of_(B_1,M_1)}, the composition
\[\phi\circ\pi_{2,1}\circ\ldots\circ\pi_{n,n-1}\]
is an arrow from \((B_n,M_n)\) to \((C(S_N),u_{S_N})\).
\end{rem}
\subsection{More models of \(C(S_4^+)\)}
\label{subsec:more_models_of_C(S_4^+)}
We end this section by listing further models of \(C(S_4^+)\).
They may be used in order to obtain additional models in the general case of \(C(S_N^+)\) by filling up the diagonal with units.
In the following let \(A\) always be the \(C^*\)-algebra as in Definition \ref{defn:R_(a,b),(c,d)}.
\begin{ex}\label{ex:the_common_model_for_C(S_4^+)}
The idea of the matrices \(R_{(a,b),(c,d)}\) (and the associated models \((A,R_{(a,b),(c,d)})\)) originates in the matrix
\[
\widehat{R}:=\begin{pmatrix}p&1-p&0&0\\1-p&p&0&0\\0&0&q&1-q\\0&0&1-q&q\end{pmatrix}
\]
which is usually taken into account when proving non-commutativity of \(C(S_4^+)\).
However, we have
\[
\widehat{R}\operp \widehat{R}=
\scalebox{0.8}{$
\begin{pmatrix}
\parbox{15.6cm}{
\begin{tabular}{r @{\hspace{2pt}}c@{\hspace{2pt}} l @{\hspace{20pt}} r @{\hspace{2pt}}c@{\hspace{2pt}} l @{\hspace{20pt}} r @{\hspace{2pt}}c@{\hspace{2pt}} l @{\hspace{20pt}} r @{\hspace{2pt}}c@{\hspace{2pt}} l }
$p$&$\otimes$& $p$&$p$&$\otimes$& $(1-p)$&&\begin{picture}(0,0)\put(-0.2,-0.7){0}\end{picture}&&&\begin{picture}(0,0)\put(-0.2,-0.7){0}\end{picture}&\\[4pt]
$+(1-p)$&$\otimes$& $(1-p)$&$+(1-p)$&$\otimes$&$p$&&&&&&\\[10pt]

$(1-p)$&$\otimes$& $p$&$(1-p)$&$\otimes$& $(1-p)$&&\begin{picture}(0,0)\put(-0.2,-0.7){0}\end{picture}&&&\begin{picture}(0,0)\put(-0.2,-0.7){0}\end{picture}&\\[4pt]
$+p$&$\otimes$& $(1-p)$&$+p$&$\otimes$&$p$&&&&&&\\[10pt]

&\begin{picture}(0,0)\put(-0.2,-0.7){0}\end{picture}&&&\begin{picture}(0,0)\put(-0.2,-0.7){0}\end{picture}&&$q$&$\otimes$& $q$&$q$&$\otimes$&$(1-q)$\\[4pt]
&&&&&&$+(1-q)$&$\otimes$& $(1-q)$&$+(1-q)$&$\otimes$&$q$\\[10pt]
&\begin{picture}(0,0)\put(-0.2,-0.7){0}\end{picture}&&&\begin{picture}(0,0)\put(-0.2,-0.7){0}\end{picture}&&$(1-q)$&$\otimes$& $q$&$(1-q)$&$\otimes$&$(1-q)$\\[4pt]
&&&&&&$+q$&$\otimes$& $(1-q)$&$+q$&$\otimes$&$q$\\
\end{tabular}
}
\end{pmatrix}
$}
\]
and the corresponding object in \(\mathcal{C}_4\) is equivalent to \((A,\widehat{R})\).
Therefore, any \(\operp\)-product of matrices \(\widehat{R}\) gives an object \((\widehat{B}_n,\widehat{M}_n)\) equivalent to \((A,\widehat{R})\), i.e. the sequence of models \(\varphi_n:C(S_4^+)\rightarrow \widehat{B}_n\) exists, but, as a sequence, it is trivial.
Note that the inverse of \(\widehat{\pi}_{2n,n}\), the arrow
\[(\widehat{B}_n,\widehat{M}_n)\overset{\widehat{\pi}_{2n,n}^{-1}}{\xrightarrow{\quad\quad\quad\quad}}(\widehat{B}_{2n},\widehat{M}_n\operp \widehat{M}_n)\]
defines a comultiplication on \(\widehat{B}_n\) such that \((\widehat{B}_n,\widehat{M}_n)\) becomes a compact matrix quantum group. This seems \emph{not} to be the case for the objects  \((B_m,M_m)\) from Definition \ref{defn:M_n_and_B_n}, see also Remark \ref{RemModels} below.
\end{ex}
\begin{ex}\label{ex:R_from-introduction}
In order to obtain a model for \(C(S_4^+)\),  the symbols \(p\) and \(q\) do not have to be on the diagonal of \(R_{(a,b),(c,d)}\).
The matrix
\[R:=\begin{pmatrix}p&0&1-p&0\\1-p&0&p&0\\0&q&0&1-q\\0&1-q&0&q\end{pmatrix}\]
from the introduction gives an example for such a matrix.
\end{ex}
\begin{ex}\label{ex:second_power_of_R}
As mentioned in the introduction, the second \(\operp\)-power of \(R\) is
\[R\operp R=\scalebox{0.9}{$
\begin{pmatrix}
\parbox{14.3cm}{
\begin{tabular}{r @{\hspace{2pt}}c@{\hspace{2pt}} l @{\hspace{20pt}} r @{\hspace{2pt}}c@{\hspace{2pt}} l @{\hspace{20pt}} r @{\hspace{2pt}}c@{\hspace{2pt}} l @{\hspace{20pt}} r @{\hspace{2pt}}c@{\hspace{2pt}} l }
$p$&$\otimes$& $p$&$(1-p)$&$\otimes$& $q$&$p$&$\otimes$& $(1-p)$&$(1-p)$&$\otimes$& $(1-q)$\\[4pt]
$(1-p)$&$\otimes$& $p$&$p$&$\otimes$&$q$&$(1-p)$&$\otimes$ &$(1-p)$&$p$&$\otimes$&$(1-q)$\\[4pt]
$q$&$\otimes$& $(1-p)$&$(1-q)$&$\otimes$& $(1-q)$&$q$&$\otimes$& $p$&$(1-q)$&$\otimes$&$q$\\[4pt]
$(1-q)$&$\otimes$& $(1-p)$&$q$&$\otimes$& $(1-q)$&$(1-q)$&$\otimes$& $p$&$q$&$\otimes$& $q$
\end{tabular}
}
\end{pmatrix}
$}\]
and it is obviously not equivalent to the model given by \(R\).
\end{ex}
\begin{ex}\label{ex:third_power_of_R}
The third \(\operp\)-power of \(R\) is given by
\[
\scalebox{0.68}{
\(
\begin{pmatrix}
p\otimes p\otimes p & p\otimes (1-p)\otimes q & p\otimes p\otimes (1-p)  & p\otimes (1-p)\otimes (1-q)\\
+(1-p)\otimes q\otimes(1-p) & +(1-p)\otimes (1-q)\otimes (1-q) & +(1-p)\otimes q\otimes p & +(1-p)\otimes (1-q)\otimes q\\
&&&\\
&&&\\
(1-p)\otimes p\otimes p & (1-p)\otimes (1-p)\otimes q & (1-p)\otimes p\otimes (1-p) & (1-p)\otimes (1-p)\otimes (1-q)\\
+p\otimes q\otimes(1-p) & +p\otimes (1-q)\otimes (1-q) & +p\otimes q\otimes p & +p\otimes (1-q)\otimes q\\
&&&\\
&&&\\
q\otimes (1-p)\otimes p & q\otimes p\otimes q & q\otimes (1-p)\otimes (1-p) & q\otimes p\otimes (1-q)\\
+(1-q)\otimes (1-q)\otimes(1-p) & +(1-q)\otimes q\otimes (1-q) & +(1-q)\otimes (1-q)\otimes p & +(1-q)\otimes q\otimes q\\
&&&\\
&&&\\
(1-q)\otimes (1-p)\otimes p & (1-q)\otimes p\otimes q & (1-q)\otimes (1-p)\otimes (1-p) & (1-q)\otimes p\otimes (1-q)\\
+q\otimes (1-q)\otimes(1-p) & +q\otimes q\otimes (1-q) & +q\otimes (1-q)\otimes p & +q\otimes q\otimes q\\
\end{pmatrix}
\)
}
\]
If \(A_3\subset A^{\otimes 3}\) denotes the \(C^*\)-subalgebra generated by the entries of \(R^{\operp 3}\), then the matrix \(R^{\operp 3}\) allows an arrow 
\[\big(A_3,R^{\operp 3}\big)\overset{\phi_3}{\longrightarrow}\big(C(S_4),u_{S_4}\big).\]
The proof is analogous to the proof of part (c) in Lemma \ref{lem:proving_all_properties_of_(B_1,M_1)} apart from the fact that the claim \((*)\) can be directly checked in the present case.
Consequently, all \(R^{\operp n}\) with \(n\ge 3\) allow a corresponding arrow \(\phi_n\) to \(\big(C(S_4),u_{S_4}\big)\).
\end{ex}

\begin{rem}\label{RemModels}
Let us comment a bit on the matrix \(R\) and its powers as in Examples \ref{ex:R_from-introduction}, \ref{ex:second_power_of_R} and \ref{ex:third_power_of_R}.
\begin{itemize}
\item[(a)]
The pair \((A,R)\) does not allow an arrow \(\nu\) to \((\C,\mathds{1}_{M_N(\C)})\), hence we cannot define the arrows \(\pi_{n+1,n}\) as in Definition \ref{defn:M_n_and_B_n} and their existence is unclear.
This is why we focused on even \(\operp\)-powers of \(R\) and defined in the introduction \(M_1\!:=\!R\operp R\).
Roughly speaking, every \(\operp\)-multiplication with \(R\) in some sense ``swaps'' the second and third column such that every second \(\operp\)-power of \(R\) has the \(p\)'s and \(q\)'s in the right places.
\item[(b)]
Neither the pair \((A,R)\) nor \((B_1,M_1)=(B_1,R\operp R)\) allows an arrow \(\phi\) to \(\big(C(S_N),u_{S_N}\big)\).
This is clear for \(R\) as it has vanishing entries.
For \(M_1=R\operp R\) this follows for example from \(m^{(1)}_{11}m^{(1)}_{23}=0\), see Example \ref{ex:second_power_of_R}.
\item[(c)]
In virtue of Section \ref{sec:the_limit_object} following hereafter, the sequence \((B_n,M_n)\) could be used alternatively to the one from Definition \ref{defn:M_n_and_B_n}. Indeed, in order to obtain Theorem \ref{thm:B_infty_defines_CMQG}, it suffices that there is a pair \((B_m,M_m)\) allowing for a (non-injective) arrow \(\phi_m\) to \((C(S_4),u_{S_4})\). 
\end{itemize}
\end{rem}
\section{The limit object \((B_{\infty},M_{\infty})\)}\label{sec:the_limit_object}
\subsection{Inverse limits of inverse systems}
\label{subsec:inverse_limits_of_inverse_systems}
Inverse systems and inverse limits can be defined in a much more general context, see for example \cite{philipps_inverselimit} and the references mentioned there.
However, we stick to a very special situation such that its description and the proof of existence becomes easy to handle.
\vspace{11pt}
\newline
Consider for \(N\in\N\) the category \(\mathcal{C}_N\) from Definition \ref{defn:category_C_N}.
We call a diagram of the form
\begin{equation*}\label{eqn:inverse_system}
(D_1,M_1)\overset{\pi_{2,1}}{\xlongleftarrow{\quad\quad}}\cdots \overset{\pi_{n,n-1}}{\xlongleftarrow{\quad\quad}}(D_n,M_n) \overset{\pi_{n+1,n}}{\xlongleftarrow{\quad\quad}}(D_{n+1},M_{n+1})\overset{\pi_{n+2,n+1}}{\xlongleftarrow{\quad\quad}}\cdots\quad
\end{equation*}
an \emph{inverse system}.
Recall, see for example \cite{maclane_categories}, that the limit of a diagram as above is the minimal object \((D_{\infty},M_{\infty})\) in \(\mathcal{C}_N\) that allows a commuting diagram of the form
\begin{equation}\label{eqn:diagram_for_(D_infty,M_infty)}
\setlength{\unitlength}{0.5cm}
\raisebox{-1.5cm}{\begin{picture}(10,7)
\put(3.2,6) {$(D_{\infty},M_{\infty})$}
\put(-7,0) {$(D_1,M_1)\overset{\pi_{2,1}}{\xlongleftarrow{\quad\quad}}\;\cdots\;
(D_n,M_{n})\,\;\overset{\pi_{n+1,n}}{\xlongleftarrow{\quad\quad}}\; (D_{n+1},M_{n+1})\;\overset{\pi_{n+2,n+1}}{\xlongleftarrow{\quad\quad}}\;\cdots$}
\put(2.8,5.4){\vector(-2,-1){7.2}}
\put(6.4,5.4){\vector(1,-2){2}}
\put(3.9,5.4){\vector(-1,-2){2}}
\put(-1.4,4){$\phi_1$}
\put(0,2.5){$\cdots$}
\put(2,4){$\phi_n$}
\put(7.4,4){$\phi_{n+1}\quad$}
\put(9,2.5){$\cdots$}
\end{picture}.
}\vspace{6pt}
\end{equation}
Minimality says that for every other object \((B,M)\) that allows a diagram of this form,
\begin{equation}\label{eqn:diagram_for_(B,M)}
\setlength{\unitlength}{0.5cm}
\raisebox{-1.5cm}{\begin{picture}(10,7)
\put(3.6,6) {$(B,M)$}
\put(-7,0) {$(D_1,M_1)\overset{\pi_{2,1}}{\xlongleftarrow{\quad\quad}}\;\cdots\;
(D_n,M_{n})\,\;\overset{\pi_{n+1,n}}{\xlongleftarrow{\quad\quad}}\; (D_{n+1},M_{n+1})\;\overset{\pi_{n+2,n+1}}{\xlongleftarrow{\quad\quad}}\;\cdots$}
\put(3,5.5){\vector(-2,-1){7}}
\put(6.4,5.5){\vector(1,-2){1.8}}
\put(3.9,5.5){\vector(-1,-2){1.8}}
\put(-1.4,4){$\psi_1$}
\put(0,2.5){$\cdots$}
\put(2,4){$\psi_n$}
\put(7.4,4){$\psi_{n+1}\quad$}
\put(9,2.5){$\cdots$}
\end{picture},
}\vspace{6pt}
\end{equation}
there exists an arrow 
\[(B,M)\overset{\psi}{\longrightarrow}(D_{\infty},M_{\infty}),\]
such that each arrow \(\phi_n\) in Diagram \ref{eqn:diagram_for_(D_infty,M_infty)} factors through \(\psi\), i.e.  for every \(n\in\N\) the following diagram commutes:
\begin{center}
\begin{minipage}{150mm}
~\vspace{1cm}

\begin{equation}\label{eqn:universal_property_of_(D_infty,M_infty)}
\setlength{\unitlength}{0.5cm}
\raisebox{-3cm}{\begin{picture}(10,7)
\put(3.4,9.5) {$\big(\,B\,,\,M\,\big)$}
\put(3,5.8) {$\big(\,D_{\infty}\,,\,M_{\infty}\,\big)$}
\put(-0.8,2) {$(\,D_n\,,\,M_{n}\,)\,\;\overset{\pi_{n+1,n}}{\xlongleftarrow{\quad\quad\quad\;}}\; (D_{n+1},M_{n+1})$}
\put(5,9){\vector(0,-1){2}}
\put(3.3,9){\vector(-1,-2){3}}
\put(6.7,9){\vector(1,-2){3}}
\put(3.7,5){\vector(-1,-2){1}}
\put(6.6,5){\vector(1,-2){1}}
\put(5.2,7.8){$\scalebox{0.8}{$\psi$}$}
\put(1,6.5){$\scalebox{0.8}{$\psi_n$}$}
\put(8.6,6.5){$\scalebox{0.8}{$\psi_{n+1}$}$}
\put(2.2,4){$\scalebox{0.8}{$\phi_n$}$}
\put(7.2,4){$\scalebox{0.8}{$\phi_{n+1}$}\quad$}
\end{picture}
}
\end{equation}
\end{minipage}
\end{center}
\begin{lem}\label{lem:existence_of_inverse_limits}
Let \(\Big(\,\big((D_n,M_n)\big)_{n\in\N}\,,\,\big(\pi_{n+1,n}\big)_{n\in\N}\,\Big)\) be an inverse system in \(\mathcal{C}_N\).
Denote for \(n\!\in\!\N\) the entries of \(M_n\) with \(m^{(n)}_{ij}\).
If for all \(1\!\le\!i,j\!\le\!N\) the sequence of \((i,j)\)-th entries \((m^{(n)}_{ij}\big)_{n\in\N}\) is bounded, then the limit \((D_{\infty},M_{\infty})\) of the inverse system exists.
We denote it by 
\[\lim_{\infty\leftarrow n}(D_n,M_n):=(D_{\infty},M_{\infty})\]
and call it the inverse limit of the given inverse system.
\end{lem}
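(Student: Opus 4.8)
The plan is to realise the limit object concretely inside the $C^*$-algebraic direct product $\prod_{n\in\N} D_n$ (bounded sequences, supremum norm), just as one constructs inverse limits of $C^*$-algebras, and then to cut down to the subalgebra generated by a distinguished matrix so as to land back inside $\mathcal{C}_N$. First I would form, for each pair $(i,j)$, the sequence $m^{(\infty)}_{ij}:=\big(m^{(n)}_{ij}\big)_{n\in\N}$. The hypothesis that every such sequence is bounded is precisely what guarantees $m^{(\infty)}_{ij}\in\prod_{n\in\N} D_n$; this is the single place where boundedness is used, and it is the reason the lemma requires it. I would then set $M_\infty:=\big(m^{(\infty)}_{ij}\big)_{1\le i,j\le N}$ and define $D_\infty$ to be the $C^*$-subalgebra of $\prod_{n} D_n$ generated by the $N^2$ entries of $M_\infty$, so that $(D_\infty,M_\infty)$ is by construction an object of $\mathcal{C}_N$.

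Next I would produce the cone. Let $\phi_n\colon D_\infty\to D_n$ be the restriction of the $n$-th coordinate projection. It sends $m^{(\infty)}_{ij}$ to $m^{(n)}_{ij}$, hence $(\id\otimes\phi_n)M_\infty=M_n$, so $\phi_n$ is an arrow in $\mathcal{C}_N$ (and its image is $C^*(m^{(n)}_{ij})=D_n$). The compatibility $\pi_{n+1,n}\circ\phi_{n+1}=\phi_n$ holds because both sides send the generator $m^{(\infty)}_{ij}$ to $m^{(n)}_{ij}$, using that $\pi_{n+1,n}$ is itself an arrow, i.e. $\pi_{n+1,n}\big(m^{(n+1)}_{ij}\big)=m^{(n)}_{ij}$. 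This yields the commuting diagram required of the limit in \ref{eqn:diagram_for_(D_infty,M_infty)}.

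Finally I would verify the universal property. Given any competing object $(B,M)$ carrying a compatible family of arrows $\psi_n\colon (B,M)\to(D_n,M_n)$, define $\psi\colon B\to\prod_{n} D_n$ by $\psi(b)=(\psi_n(b))_n$; being componentwise a $^*$-homomorphism and contractive ($\|\psi(b)\|=\sup_n\|\psi_n(b)\|\le\|b\|$), it is well defined into the product. Since each $\psi_n$ is an arrow, $\psi$ sends the entry $M_{ij}$ of $B$ to $(m^{(n)}_{ij})_n=m^{(\infty)}_{ij}$; as $B$ is generated by the $M_{ij}$, the image of $\psi$ is exactly $D_\infty$, so $\psi$ is an arrow $(B,M)\to(D_\infty,M_\infty)$, and $\phi_n\circ\psi=\psi_n$ holds since it holds on the generators $M_{ij}$. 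Uniqueness of $\psi$ is automatic, since any arrow in $\mathcal{C}_N$ is forced to send entries to entries and is therefore determined on generators.

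I expect the only real subtlety to be bookkeeping rather than depth: one must remember to pass to the $C^*$-subalgebra \emph{generated by} $M_\infty$, rather than to the full inverse-limit algebra $\{(d_n)_n : \pi_{n+1,n}(d_{n+1})=d_n\}$, in order to remain inside $\mathcal{C}_N$; and one must invoke boundedness at exactly the right moment, namely to make $M_\infty$ a matrix over an honest $C^*$-algebra. Everything else is a routine check that the constructed maps are $^*$-homomorphisms and respect the distinguished matrices.
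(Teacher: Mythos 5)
Your proof is correct, and it produces the same limit object as the paper, but via a different construction. The paper builds \(D_\infty\) abstractly: it takes the free \(^*\)-algebra \(\mathcal{D}\) on \(N^2\) symbols, pulls back the norms of the \(D_n\) to \(C^*\)-seminorms \(f_n=\norm{\phi_n'(\cdot)}_{D_n}\), sets \(f=\sup_n f_n\) (finite precisely by the boundedness hypothesis, which plays the same role as in your argument), and completes \(\mathcal{D}/\ker f\); the cone \(\phi_n\) then comes from \(\ker f\subseteq\ker f_n=\ker\phi_n'\), and the universal property is verified through the norm estimate \(\norm{b}_B\ge\sup_n\norm{\psi_n(b)}_{D_n}=\norm{\tilde b}_{D_\infty}\) on the dense \(^*\)-subalgebra generated by the entries of \(M\). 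You instead realize the same algebra concretely as the \(C^*\)-subalgebra of the \(\ell^\infty\)-product \(\prod_n D_n\) generated by the coherent sequences \(\big(m^{(n)}_{ij}\big)_n\); since the sup-norm of the product, restricted to \(^*\)-polynomial expressions in these generators, is exactly the paper's \(f\), the two objects are canonically isomorphic. Your packaging makes the universal property essentially free (bundle the cone \((\psi_n)\) into a single map into the product and check it on generators), whereas the paper has to run the norm inequality; your remark that one must cut down to the subalgebra generated by \(M_\infty\) rather than take the full coherent-sequence algebra is exactly the right bookkeeping point for staying inside \(\mathcal{C}_N\). One thing worth adding to your version: the paper also records that the seminorms \(f_n\) are \emph{increasing} (because \(\phi_n'=\pi_{n+1,n}\circ\phi_{n+1}'\) and \(^*\)-homomorphisms are norm-decreasing), so the supremum is in fact a limit. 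This is not needed for the lemma itself, but it is invoked repeatedly afterwards (Lemmas \ref{lem:linear_independence_is_reached_before_limit}, \ref{lem:g_is_c*-norm} and Theorem \ref{thm:B_infty_defines_CMQG}); it holds verbatim in your model, since \(\norm{\phi_n(d)}_{D_n}=\norm{\pi_{n+1,n}\big(\phi_{n+1}(d)\big)}_{D_n}\le\norm{\phi_{n+1}(d)}_{D_{n+1}}\), so you would want to state it explicitly.
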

\begin{proof}
Existence and uniqueness is not difficult to prove, see for example \cite{philipps_inverselimit}.
However, to keep this work self-contained, we present an own proof.
\newline
We start with the proof of existence.
\newline
\textbf{Step 1: Construction of \((D_{\infty},M_{\infty})\):}
Consider the free \(^*\)-algebra \(\mathcal{D}\) generated by \(N^2\) symbols \(m_{ij}^{(\infty)}\) with \(1\le i,j\le N\) and let \(\phi'_n:\mathcal{D}\rightarrow D_n\) be the \(^*\)-homomorphism  given by the mapping \(\phi'_n(m_{ij}^{(\infty)})=m_{ij}^{(n)}\).
Impose on \(\mathcal{D}\) the \(C^*\)-seminorms
\[f_n:=\norm{\phi'_n(\cdot)}_{D_n}\]
as well as
\[f:=\sup_{n\in\N}{f_n}.\]
Note that \((f_n)_{n\in\N}\) is bounded pointwise by assumption, so \(f\) exists. 
We have \(\phi'_n=\pi_{n+1,n}\circ\phi'_{n+1}\) and  \(\pi_{n+1,n}\) is norm-decreasing as it is a \(^*\)-homomorphism.
Therefore, the sequence \(\big(f_n\big)_{n\in\N}\) is increasing and the supremum that defines \(f\) is in fact a limit.
Evidently, \(f\) gives a \(C^*\)-norm on the quotient \(\mathcal{D}_{\infty}:=\mathcal{D}/\ker(f)\) and we define \(D_{\infty}\) to be its completion.
Considering the \(m^{(\infty)}_{ij}\) as elements in \(D_{\infty}\) and defining \(M_{\infty}:=\big(m^{(\infty)}_{ij}\big)_{1\le i,j\le N}\), the pair \((D_{\infty},M_{\infty})\) is an object in our category \(\mathcal{C}_N\).
\newline
Existence of the arrows 
\[(D_{\infty},M_{\infty})\overset{\phi_n}{\longrightarrow}(D_n,M_n)\]
for every \(n\in\N\) can now be proved as follows:
Firstly, we have \(\ker(f)\subseteq \ker(f_n)\) because \(f:=\sup f_n\).
Secondly, it holds \(\ker(f_n)=\ker(\phi'_n)\) because \(f_n:=\norm{\phi'_n(\cdot)}_{D_n}\).
\newline
We conclude
\[\QR{\Big(\QR{\mathcal{D}}{\ker(f)}\Big)}{\ker(f_n)}=\QR{\mathcal{D}}{\ker(f_n)}=\QR{\mathcal{D}}{\ker(\phi'_n)}=D_n\]
and the last equality holds because \(\phi_n':\mathcal{D}\rightarrow D_n\) is by definition a surjective \(^*\)-homomorphism.
Finally, the quotient map
\[\kappa_n:\mathcal{D}_{\infty}=\QR{\mathcal{D}}{\ker(f)}\longrightarrow\QR{\Big(\QR{\mathcal{D}}{\ker(f)}\Big)}{\ker(f_n)}=D_n\]
is a (norm-decreasing) \(^*\)-homomorphism.
Its extension \(\phi_n:D_{\infty}\rightarrow D_n\) is the desired arrow from \((D_{\infty},M_{\infty})\) to \((D_n,M_n)\) as it holds \(\phi_n(m^{(\infty)}_{ij})=m^{(n)}_{ij}\) by construction.
We conclude that a diagram as in Picture \ref{eqn:diagram_for_(D_infty,M_infty)} exists, so we can turn towards the universal property of \((D_{\infty},M_{\infty})\), described by Diagram \ref{eqn:universal_property_of_(D_infty,M_infty)}.
\newline
\textbf{Step 2: Universal property of \((D_{\infty},M_{\infty})\):}
Consider an object \((B,M)\) as described in Diagram \ref{eqn:diagram_for_(B,M)}.
Denote with \(\mathcal{B}\subseteq B\) the \(^*\)-subalgebra generated by the entries of \(M\).
By the definition of a limit we need to prove the existence of the commuting Diagrams \ref{eqn:universal_property_of_(D_infty,M_infty)}.
It suffices to prove existence of an arrow \(\psi\) from \((B,M)\) to \((D_{\infty},M_{\infty})\) as there is at most one arrow from one object to another.
To do so, we consider first a \(^*\)-algebraic expression \(b\) in the letters \(m_{ij}\) and we let \(\tilde{b}\) be the expression \(b\) but every letter \(m_{ij}\) is replaced by  \(m^{(\infty)}_{ij}\).
By the properties of our considered category, it holds
\[\psi_n(b)=\phi_n(\tilde{b})\]
for every \(n\in\N\).
As the \(\psi_n:B\rightarrow D_n\) are norm-decreasing, we deduce
\[\norm{b}_B\ge \sup_{n\in\N}\norm{\psi_n(b)}_{D_n}=\sup_{n\in\N}\norm{\phi_n(\tilde{b})}_{D_n}=\norm{\tilde{b}}_{D_{\infty}}.\]
i.e. the mapping \(m_{ij}\mapsto m^{(\infty)}_{ij}\) defines a norm-decreasing \(^*\)-homomorphism from \(\mathcal{B}\) to \(D_{\infty}\) and it can be extended to a \(^*\)-homomorphism \(\psi:B\rightarrow D_{\infty}\).
This finishes the proof of existence.
\newline
\textbf{Step 3: Uniqueness of \((D_{\infty},M_{\infty})\)} Uniqueness up to isomorphism is clear by the universal property of a limit. In the case of two limit objects we could switch roles to construct invertible arrows between them.
\end{proof}
\subsection{The inverse limit \((B_{\infty},M_{\infty})\)}
Considering the sequence of models \(\big((B_n,M_n)\big)_{n\in\N}\) as constructed in Section \ref{sec:a_sequence_of_models}, we have an inverse system
\[
(B_1,M_1)\overset{\pi_{1,2}}{\xlongleftarrow{\quad\quad}}\cdots \overset{\pi_{n-1,n}}{\xlongleftarrow{\quad\quad}}(B_n,M_n) \overset{\pi_{n,n+1}}{\xlongleftarrow{\quad\quad}}(B_{n+1},M_{n+1})\overset{\pi_{n+1,n+2}}{\xlongleftarrow{\quad\quad}}\cdots\quad.
\]
Its inverse limit
\[\lim_{\infty\leftarrow n}(B_n,M_n)=:(B_{\infty},M_{\infty})\]
exists by Lemma \ref{lem:existence_of_inverse_limits}.
The matrix \(M_1\) is a magic unitary as it defines a model of \(C(S_4^+)\), see Lemma \ref{lem:proving_all_properties_of_(B_1,M_1)}.
By Lemma \ref{lem:operp-product_gives_again_magic_unitary}, all matrices \(M_n\) are magic unitaries and so does \(M_{\infty}\), hence the inverse limit above defines a model of \(C(S_N^+)\),
\[\big(C(S_N^+),u_{S_N^+}\big)\overset{\varphi_{\infty}}{\xrightarrow{\quad\quad}}(B_{\infty},M_{\infty}).\]
It is larger than all models \((B_n,M_n)\) in the sense that we have, by definition of the limit of a diagram, arrows from \((B_{\infty},M_{\infty})\) to every \((B_n,M_n)\).
\vspace{11pt}
\newline
In this section we prove that this inverse limit is a compact matrix quantum group.
It only remains to show that on \(B_{\infty}\) there exists a comultiplication \(\Delta\) that fulfils
\[\Delta(m^{(\infty)}_{ij})=\sum_{k=1}^{N}m^{(\infty)}_{ik}\otimes m^{(\infty)}_{kj}.\]
In order to prove this, we consider the following situation.
Consider some \(N\in\N\) with \(N\ge 4\) and let \(P\) be a \(^*\)-polynom in the indeterminants \((X_{ij})_{1\le i,j\le N}\).
For \(n\in\N\cup\{\infty\}\) we define \(P(M_n)\) to be the element in \(B_{n}\) obtained by inserting the entries of \(M_n\) canonically into \(P\).
Analogously, let \(P(M_{n}\operp M_{n})\) be given by replacing \(X_{ij}\) with \(\displaystyle\sum_{k=1}^{N}m^{(n)}_{ik}\otimes m^{(n)}_{kj}\in B_{n}\otimes B_{n}\).
\newline
Existence of the comultiplication \(\Delta\) on \(B_{\infty}\) as described above is proved once we have shown the inequality
\[
\norm{P(M_{\infty})}_{B_{\infty}}\ge \norm{P(M_{\infty}\operp M_{\infty})}_{B_{\infty}\otimes B_{\infty}}\]
for all \(^*\)-polynomials \(P\).
\vspace{11pt}
\newline
The following results will be crucial in order to prove Theorem \ref{thm:B_infty_defines_CMQG}, saying that \((B_{\infty},M_{\infty})\) yields a CMQG.
The logical structure is as follows:
Lemma \ref{lem:linear_independence_is_reached_before_limit} is preparatory for Lemma \ref{lem:g_is_c*-norm} which in turn entails Lemma \ref{lem:g_equals_the_norm_on_B_infty_otimes_b_infty}.
Eventually, Lemma \ref{lem:g_equals_the_norm_on_B_infty_otimes_b_infty} and Lemma \ref{lem:(phi_n_otimes_phi_n)_circ_Delta_gives_entries_of_M_2n} are used in Theorem \ref{thm:B_infty_defines_CMQG}.
\begin{lem}\label{lem:linear_independence_is_reached_before_limit}
Consider the arrows
\[(B_{\infty},M_{\infty})\overset{\phi_k}{\longrightarrow}(B_k,M_k)\]
which exist by the property of an inverse limit.
Let \(a_1,\ldots,a_N\in B_{\infty}\) be linearly independent.
Then there is a \(K\in\N\) such that \(\phi_k(a_1),\ldots,\phi_k(a_N)\in B_k\) are linearly independent for all \(k\ge K\).
\newline
In particular, we find for any non-zero \(a_i\) some \(K\in N\) such that \(\phi_k(a_i)\!\neq\!0\) for all \(k\ge K\).
\end{lem}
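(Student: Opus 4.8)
The plan is to replace the $N$ individual vectors by the finite-dimensional subspace they span, and to control the kernels of the restricted maps $\phi_k$ as $k$ grows. Set $V:=\Span(a_1,\ldots,a_N)\subseteq B_\infty$; since the $a_i$ are linearly independent we have $\dim V=N$, and the desired conclusion that $\phi_k(a_1),\ldots,\phi_k(a_N)$ are linearly independent is precisely the statement that the linear map $\phi_k|_V:V\to B_k$ is injective, i.e. $\ker(\phi_k|_V)=\{0\}$. So it suffices to show that this kernel vanishes for all sufficiently large $k$.

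First I would extract two facts from the construction in Lemma \ref{lem:existence_of_inverse_limits}. The cone is compatible, $\phi_k=\pi_{k+1,k}\circ\phi_{k+1}$, and each $\pi_{k+1,k}$, being a $^*$-homomorphism, is norm-decreasing; hence for every $v\in B_\infty$
\[
\norm{\phi_k(v)}_{B_k}=\norm{\pi_{k+1,k}\big(\phi_{k+1}(v)\big)}_{B_k}\le\norm{\phi_{k+1}(v)}_{B_{k+1}},
\]
so the sequence $\big(\norm{\phi_k(v)}_{B_k}\big)_k$ is nondecreasing. Moreover, by construction the norm on $B_\infty$ is $\norm{v}_{B_\infty}=\sup_k\norm{\phi_k(v)}_{B_k}$, so in particular the family $(\phi_k)_k$ is jointly faithful: if $\phi_k(v)=0$ for all $k$, then $v=0$.

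Next I would use the same compatibility to see that the kernels shrink. If $v\in\ker(\phi_{k+1}|_V)$, then $\phi_k(v)=\pi_{k+1,k}\big(\phi_{k+1}(v)\big)=0$, whence
\[
\ker(\phi_{k+1}|_V)\subseteq\ker(\phi_k|_V).
\]
Thus $\big(\ker(\phi_k|_V)\big)_k$ is a decreasing chain of subspaces of the $N$-dimensional space $V$. A decreasing chain of subspaces of a finite-dimensional vector space stabilizes, so there exist $K\in\N$ and a subspace $W\subseteq V$ with $\ker(\phi_k|_V)=W$ for all $k\ge K$.

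Finally I would show $W=\{0\}$. For $v\in W$ we have $\phi_k(v)=0$ for all $k\ge K$; since $\norm{\phi_k(v)}_{B_k}$ is nondecreasing in $k$ and vanishes for $k\ge K$, it vanishes for every $k$, so $\norm{v}_{B_\infty}=\sup_k\norm{\phi_k(v)}_{B_k}=0$ and $v=0$ by joint faithfulness. Hence $\ker(\phi_k|_V)=\{0\}$ for all $k\ge K$, which is the claim; the ``in particular'' assertion is just the case $N=1$ applied to a single nonzero $a_i$. The conceptual key is the stabilizing chain of kernels, and the only step requiring genuine care is the norm identity $\norm{\cdot}_{B_\infty}=\sup_k\norm{\phi_k(\cdot)}$: this is immediate on the dense $^*$-subalgebra generated by the $m^{(\infty)}_{ij}$ from Lemma \ref{lem:existence_of_inverse_limits}, and I would extend it to all of $B_\infty$ by a routine $\varepsilon$-approximation using that the $\phi_k$ are contractive.
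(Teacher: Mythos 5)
Your proof is correct, but it takes a genuinely different route from the paper. The paper argues by induction on $N$: assuming the statement for $N$ elements and supposing it fails for $N+1$, it extracts two levels $L_1<L_3$ at which $\phi_{L_1}(a_{N+1})$ and $\phi_{L_3}(a_{N+1})$ are expressed with \emph{different} coefficient vectors $(\alpha_i)\neq(\beta_i)$ in terms of the first $N$ images, then pushes both expressions down to a common level $K$ via the connecting maps $\pi_{m,n}$ to contradict linear independence there. You instead pass to $V=\Span(a_1,\ldots,a_N)$ and observe that the compatibility $\phi_k=\pi_{k+1,k}\circ\phi_{k+1}$ makes $\bigl(\ker(\phi_k|_V)\bigr)_k$ a decreasing chain of subspaces of an $N$-dimensional space, which must stabilize at some $W$; since every $v\in W$ then satisfies $\phi_k(v)=0$ for all $k$ (monotonicity of the seminorms pushes the vanishing down to $k<K$ as well), the identity $\norm{v}_{B_\infty}=\sup_k\norm{\phi_k(v)}_{B_k}$ forces $v=0$. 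Both arguments rest on exactly the same two inputs --- the cone compatibility and the fact that the $\phi_k$ are jointly isometric in the supremum sense --- but your stabilization argument is shorter, avoids the induction and the double-coefficient bookkeeping, and isolates the actual mechanism (a descending chain in finite dimensions terminates). Your closing remark about extending the norm identity from the dense $^*$-subalgebra to all of $B_\infty$ by contractivity and $\varepsilon$-approximation is the right level of care; the paper simply asserts this identity without comment.
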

\begin{proof}
Recall from the construction of an inverse limit, compare Lemma \ref{lem:existence_of_inverse_limits}, that the sequence of \(C^*\)-semi norms \(\big(\norm{\phi_n(\cdot)}_{B_n}\big)_{n\in\N}\) is increasing and its limit is the norm \(\norm{\cdot}_{B_{\infty}}\).
\newline
We now use induction on \(N\in\N\) to prove our claim.
For \(N=1\) we observe that a collection with only one element \(a_1\) is linearly independent if its element is non-zero, so we have \(0\neq \norm{a_1}_{B_\infty}=\lim_{k\rightarrow\infty}\norm{\phi_k(a_1)}_{B_k}\).
In particular \(\phi_k(a_1)\) is non-zero for all up to finitely many \(k\in\N\).
\newline
Now let the statement be proved for some \(N\in\N\) and consider linear independent elements \(a_1,\ldots,a_{N+1}\in B_{\infty}\).
We assume the opposite of our claim, i.e. we find arbitrary large \(k\in\N\) such that \(\phi_k(a_1),\dots,\phi_k(a_{N+1})\) are linearly dependent.
By the induction hypothesis we find \(K\in\N\) such that \(\phi_k(a_1),\ldots,\phi_k(a_N)\) are linearly independent for all \(k\ge K\).
So we find some \(L_1\ge K\) such that 
\[\phi_{L_1}(a_{N+1})=\sum_{i=1}^{N}\alpha_i\phi_{L_1}(a_i)\]
for suitable coefficients \(\alpha_i\).
As \(a_{N+1}-\sum_{i=1}^{N}\alpha_ia_i\) is  non-zero by linear independence of \(a_1,\ldots,a_{N+1}\), we find by the induction base case \(L_2\ge L_1\) such that
\[\phi_{l_2}(a_{N+1})\neq\sum_{i=1}^{N}\alpha_i\phi_{l_2}(a_i)\]
for all \(l_2\ge L_2\).
With the same arguments as before we find some \(L_3\ge L_2\ge K\) such that
\[\phi_{L_3}(a_{N+1})=\sum_{i=1}^{N}\beta_i\phi_{L_3}(a_i).\]
It holds \((\beta_1,\ldots,\beta_N)\neq(\alpha_1,\ldots,\alpha_N)\) because 
\[\phi_{L_3}(a_{N+1})\neq\sum_{i=1}^{N}\alpha_i\phi_{L_3}(a_i).\]
Defining \(\pi_{m,n}:=\pi_{n+1,n}\circ\ldots\circ\pi_{m,m-1}\), we conclude that
\begin{align*}
0&=\phi_{K}(a_{N+1})-\phi_K(a_{N+1})\\
&=\big(\pi_{L_3,K}\circ\phi_{L_3}\big)(a_{N+1})-\big(\pi_{L_1,K}\circ\phi_{L_1}\big)(a_{N+1})\\
&=\sum_{i=1}^{N}(\beta_i-\alpha_i)\phi_K(a_i),
\end{align*}
a contradiction to the linear independence of \(\phi_K(a_1),\ldots,\phi_K(a_N)\).
\end{proof}
\begin{lem}\label{lem:g_is_c*-norm}
The \(C^*\)-seminorm
\begin{equation}\label{eqn:defn_of_g}
g:=\sup_{n\rightarrow\infty}\norm{\big(\phi_n\otimes\phi_n\big)(\cdot)}_{B_n\otimes B_n}
\end{equation}
is a \(C^*\)-norm on the algebraic tensor product \(B_{\infty}\odot B_{\infty}\).
\end{lem}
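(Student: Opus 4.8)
The plan is to split the assertion into two parts: that $g$ is a well-defined $C^*$-seminorm on $B_\infty\odot B_\infty$, and that it is faithful (so that it is actually a norm). The first part is essentially formal. Each $\phi_n\otimes\phi_n$ is a $^*$-homomorphism from the algebraic tensor product $B_\infty\odot B_\infty$ into the minimal tensor product $B_n\otimes B_n$, so pulling back the $C^*$-norm of $B_n\otimes B_n$ gives a $C^*$-seminorm on $B_\infty\odot B_\infty$. Exactly as in the proof of Lemma \ref{lem:existence_of_inverse_limits}, the identities $\phi_n=\pi_{n+1,n}\circ\phi_{n+1}$ together with the fact that each $\pi_{n+1,n}$ is norm-decreasing show that these seminorms are increasing in $n$, so the supremum defining $g$ is in fact a limit. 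By functoriality of the minimal tensor product, $\phi_n\otimes\phi_n$ extends to a contractive $^*$-homomorphism $B_\infty\otimes B_\infty\to B_n\otimes B_n$; hence $g\le\|\cdot\|_{B_\infty\otimes B_\infty}$ and in particular $g$ is finite. Passing to the (increasing) supremum preserves sub-additivity and sub-multiplicativity, and since $t\mapsto t^2$ is monotone one gets $g(x^*x)=\sup_n\|(\phi_n\otimes\phi_n)(x)\|^2=g(x)^2$, so $g$ is a $C^*$-seminorm.

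The real content of the lemma is faithfulness, and here I would invoke Lemma \ref{lem:linear_independence_is_reached_before_limit}. Let $x\in B_\infty\odot B_\infty$ be non-zero and write it in a minimal form $x=\sum_{i=1}^r a_i\otimes b_i$ with $a_1,\dots,a_r\in B_\infty$ linearly independent and $b_1,\dots,b_r\in B_\infty$ linearly independent. Applying Lemma \ref{lem:linear_independence_is_reached_before_limit} to each of the two families produces a $K\in\N$ such that for all $k\ge K$ both $\phi_k(a_1),\dots,\phi_k(a_r)$ and $\phi_k(b_1),\dots,\phi_k(b_r)$ are linearly independent in $B_k$. Fix such a $k$. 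Then $(\phi_k\otimes\phi_k)(x)=\sum_{i=1}^r\phi_k(a_i)\otimes\phi_k(b_i)$ is a non-zero element of the algebraic tensor product $B_k\odot B_k$, since a sum $\sum_i c_i\otimes d_i$ whose left legs $c_i$ are linearly independent can vanish only if all $d_i$ vanish, while here the $\phi_k(b_i)$ are non-zero. Because the minimal tensor norm is a cross-norm, it is faithful on the algebraic tensor product, so $\|(\phi_k\otimes\phi_k)(x)\|_{B_k\otimes B_k}>0$. Consequently $g(x)\ge\|(\phi_k\otimes\phi_k)(x)\|_{B_k\otimes B_k}>0$, which is what we wanted.

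I expect the only genuinely delicate point to be the passage from ``non-zero in the algebraic tensor product $B_k\odot B_k$'' to ``non-zero norm in $B_k\otimes B_k$''; this rests on the faithfulness of the minimal cross-norm on the algebraic tensor product, a standard but essential fact. Everything else is either formal (verifying the seminorm axioms survive the supremum) or is packaged in Lemma \ref{lem:linear_independence_is_reached_before_limit}, whose purpose here is precisely to guarantee that linear independence of the finitely many tensor legs $a_i$ and $b_i$ persists under $\phi_k$ once $k$ is large enough.
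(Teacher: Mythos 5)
Your proposal is correct and follows essentially the same route as the paper: both reduce the statement to showing that for a non-zero $x=\sum_i a_i\otimes b_i$ some $(\phi_L\otimes\phi_L)(x)$ is non-zero, both invoke Lemma \ref{lem:linear_independence_is_reached_before_limit} to preserve linear independence of the left legs (the paper only needs the right legs non-zero rather than independent, a cosmetic difference), and both then use that an element of the algebraic tensor product with linearly independent left legs and non-zero right legs is non-zero, hence has positive minimal tensor norm. Your explicit verification of the seminorm axioms and of finiteness is additional detail the paper omits, but it changes nothing substantive.
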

\begin{proof}
Recall that the algebraic tensor product \(B_{\infty}\odot B_{\infty}\) is linearly spanned by elements \(x\otimes y\) with \(x,y\in B_{\infty}\).
For the proof we fix \(0\neq x=\sum_{i=1}^{N}a_i\otimes b_i\) with \(a_i,b_i\in B_{\infty}\), all \(b_i\neq 0\) and \(a_1,\ldots,a_N\) linearly independent.
Note that the sequence \(\big(\norm{\big(\phi_n\otimes\phi_n\big)(x)}_{B_n\otimes B_n}\big)_{n\in\N}\) is increasing, so the supremum in the statement is in fact a limit.
The statement is proved if we find an \(L\in\N\) such that  \(\norm{\big(\phi_L\otimes\phi_L\big)(x)}_{B_L\otimes B_L}\) is nonzero.
\newline
By Lemma \ref{lem:linear_independence_is_reached_before_limit} we find \(K\in\N\) such that for all \(k\ge K\) the elements \(\phi_k(a_1),\ldots,\phi_k(a_{N})\) are linearly independent.
As all \(b_i\) are non-zero, we find by Lemma \ref{lem:linear_independence_is_reached_before_limit} some \(L\ge K\) such that \(\phi_L(b_i)\neq 0\) for all \(1\le i\le N\).
But then we obviously have 
\[\sum_{i=1}^{N}\phi_L(a_i)\otimes \phi_L(b_i)\neq 0\]
as the first legs are linearly independent and the second ones are non-zero.
In particular, it holds 
\[\norm{\sum_{i=1}^{N}\phi_L(a_i)\otimes \phi_L(b_i)}_{B_L\otimes B_L}\neq 0.\]
\end{proof}
We even have that \(g\) defines a norm on \(B_ {\infty}\otimes B_{\infty}\) and it is equal to the norm on the minimal tensor product.
\begin{lem}\label{lem:g_equals_the_norm_on_B_infty_otimes_b_infty}
The mapping \(g\) from Lemma \ref{lem:g_is_c*-norm} is equal to the norm on \(B_{\infty}\otimes B_{\infty}\).
\end{lem}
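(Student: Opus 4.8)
The plan is to prove the two inequalities $g \ge \norm{\cdot}_{\min}$ and $g \le \norm{\cdot}_{\min}$ separately, where $\norm{\cdot}_{\min}$ denotes the norm on the minimal tensor product $B_{\infty}\otimes B_{\infty}$ (recall that, as fixed in the Preliminaries, $\otimes$ always means the minimal tensor product of $C^*$-algebras). The first inequality is immediate from the defining property of $\norm{\cdot}_{\min}$ as the \emph{smallest} $C^*$-norm on the algebraic tensor product $B_{\infty}\odot B_{\infty}$: since $g$ is a $C^*$-norm by Lemma \ref{lem:g_is_c*-norm}, we automatically have $g(x)\ge\norm{x}_{\min}$ for every $x\in B_{\infty}\odot B_{\infty}$.

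For the reverse inequality I would use the functoriality of the minimal tensor product. Each arrow $\phi_n\colon(B_{\infty},M_{\infty})\to(B_n,M_n)$ is in particular a $*$-homomorphism $\phi_n\colon B_{\infty}\to B_n$, and functoriality of $\otimes$ provides a $*$-homomorphism
\[
\phi_n\otimes\phi_n\colon B_{\infty}\otimes B_{\infty}\longrightarrow B_n\otimes B_n
\]
on the minimal tensor products. As every $*$-homomorphism between $C^*$-algebras is norm-decreasing, this yields
\[
\norm{(\phi_n\otimes\phi_n)(x)}_{B_n\otimes B_n}\le\norm{x}_{B_{\infty}\otimes B_{\infty}}=\norm{x}_{\min}
\]
for all $x\in B_{\infty}\odot B_{\infty}$ and all $n\in\N$. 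Taking the supremum over $n$ gives $g(x)\le\norm{x}_{\min}$.

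Combining the two inequalities yields $g=\norm{\cdot}_{\min}$, i.e.\ $g$ is exactly the norm on $B_{\infty}\otimes B_{\infty}$, as claimed. There is no substantial obstacle in this argument; the only points that require care are to recall explicitly that $\otimes$ denotes the minimal tensor product, so that the functoriality statement (existence of the contractive $*$-homomorphism $\phi_n\otimes\phi_n$ on minimal tensor products) is legitimately available, and to invoke the minimality of $\norm{\cdot}_{\min}$ among all $C^*$-norms for the lower bound. The genuine content has already been discharged in Lemma \ref{lem:g_is_c*-norm} (namely that $g$ is a norm at all, rather than merely a seminorm), which itself rested on the linear-independence statement of Lemma \ref{lem:linear_independence_is_reached_before_limit}.
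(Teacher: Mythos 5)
Your proof is correct and follows essentially the same route as the paper: the inequality \(g\ge\norm{\cdot}_{B_{\infty}\otimes B_{\infty}}\) from minimality of the spatial norm among \(C^*\)-norms on \(B_{\infty}\odot B_{\infty}\) (using Lemma \ref{lem:g_is_c*-norm}), and the reverse inequality from contractivity of \(\phi_n\otimes\phi_n\) on the minimal tensor products. The paper phrases this second step by composing \(\phi_n\) with a faithful representation of \(B_n\) and comparing with the defining supremum of the spatial norm, which is precisely the proof of the functoriality statement you invoke, so the difference is only cosmetic.
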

\begin{proof}
Recall that the norm of a minimal tensor product \(\norm{\cdot}_{B\otimes C}\) of two \(C^*\)-algebras is by construction the smallest \(C^*\)-norm on \(B\odot C\) and it is defined by the supremum of the \(C^*\)-seminorms \(\norm{(\xi_1\otimes \xi_2)(\cdot)}_{B(H_1)\otimes B(H_2)}\) where \(\xi_1\) and \(\xi_2\) are representations of \(B\) on \(H_1\) and \(C\) on \(H_2\), respectively and \(\xi_1\otimes \xi_2\) is the product representation of \(B\odot C\) on \(H_1\otimes H_2\).
Furthermore, \(\xi_1\otimes \xi_2\) is faithful if both \(\xi_1\) and \(\xi_2\) are and in this case it holds \(\norm{\cdot}_{B\otimes C}=\norm{(\xi_1\otimes \xi_2)(\cdot)}_{B(H_1)\otimes B(H_2)}\).
\newline
It holds \(g\le\norm{\cdot}_{B_{\infty}\otimes B_{\infty}}\)because the \(C^*\)-semi norms \(\norm{(\phi_n\otimes \phi_n)(\cdot)}_{B_n\otimes B_n}\) all appear in the collection of semi norms \(\norm{(\xi_1\otimes \xi_2)(\cdot)}_{B(H_1)\otimes B(H_2)}\) as we can combine \(\phi_n\) with a faithful representation of \(B_n\).
\newline
Conversely, we have \(g\ge\norm{\cdot}_{B_{\infty}\otimes B_{\infty}}\) because \(g\) defines by Lemma \ref{lem:g_is_c*-norm} a \(C^*\)-norm on \(B_{\infty}\odot B_{\infty}\).
As \(\norm{\cdot}_{B_{\infty}\otimes B_{\infty}}\) is by definition the smallest possible \(C^*\)-norm on \(B_{\infty}\odot B_{\infty}\), we have \(g\ge \norm{\cdot}_{B_{\infty}\otimes B_{\infty}}\) 
\newline
Combing both inequalities, we conclude that \(g\) equals the minimal tensor product norm on \(B_{\infty}\odot B_{\infty}\), and therefore on the whole \(B_{\infty}\otimes B_{\infty}\).
\end{proof}
The following result is preparatory for Theorem \ref{thm:B_infty_defines_CMQG}.
\begin{lem}\label{lem:(phi_n_otimes_phi_n)_circ_Delta_gives_entries_of_M_2n}
For any \(n\in\N\) it holds
\begin{equation}\label{eqn:applying_phi_n_to_the_polynomial_p_a.s.o.}
\phi_{2n}\big(P(M_{\infty})\big)=(\phi_n\otimes\phi_n)\big(P(M_{\infty}\operp M_{\infty})\big)
\end{equation}
as an equation in \(B_1^{\otimes (2n)}=B_1^{\otimes n}\otimes B_1^{\otimes n}\) (or any suitable \(C^*\)-subalgebra).
\end{lem}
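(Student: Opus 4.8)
The plan is to reduce both sides of \eqref{eqn:applying_phi_n_to_the_polynomial_p_a.s.o.} to statements about the finite-level matrices $M_{2n}$ and $M_n\operp M_n$, and then to invoke the associativity of Woronowicz's $\operp$-product. First I would record that both maps appearing are $^*$-homomorphisms: $\phi_{2n}\colon B_{\infty}\to B_{2n}$ is an arrow of the inverse limit, and $\phi_n\otimes\phi_n$ is a $^*$-homomorphism from $B_{\infty}\otimes B_{\infty}$ into $B_n\otimes B_n\subseteq B_1^{\otimes n}\otimes B_1^{\otimes n}=B_1^{\otimes 2n}$; that $B_{\infty}\otimes B_{\infty}$ really carries a $C^*$-norm making this a bounded $^*$-homomorphism is exactly what Lemma \ref{lem:g_equals_the_norm_on_B_infty_otimes_b_infty} supplies. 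Since $P$ is a $^*$-polynomial and $^*$-homomorphisms commute with $^*$-polynomial evaluation, it suffices to track the action of each map on the inserted generators.

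On the left-hand side, $P(M_{\infty})$ is obtained by substituting $X_{ij}\mapsto m^{(\infty)}_{ij}$; applying $\phi_{2n}$ and using $\phi_{2n}(m^{(\infty)}_{ij})=m^{(2n)}_{ij}$ yields $\phi_{2n}\big(P(M_{\infty})\big)=P(M_{2n})$. On the right-hand side, $P(M_{\infty}\operp M_{\infty})$ is obtained by substituting $X_{ij}\mapsto\sum_{k}m^{(\infty)}_{ik}\otimes m^{(\infty)}_{kj}$; applying $\phi_n\otimes\phi_n$ and using $\phi_n(m^{(\infty)}_{ij})=m^{(n)}_{ij}$ turns each substituted entry into $\sum_{k}m^{(n)}_{ik}\otimes m^{(n)}_{kj}$, which is precisely the $(i,j)$-entry of $M_n\operp M_n$. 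Hence $(\phi_n\otimes\phi_n)\big(P(M_{\infty}\operp M_{\infty})\big)=P(M_n\operp M_n)$.

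It then remains to identify $P(M_{2n})$ with $P(M_n\operp M_n)$, for which it is enough that the two matrices have identical entries in $B_1^{\otimes 2n}$. Here I would use the associativity of $\operp$, a direct consequence of its definition together with associativity of $\otimes$: since $M_k=M_1^{\operp k}$ by Definition \ref{defn:M_n_and_B_n}, one has $M_{2n}=M_1^{\operp 2n}=M_1^{\operp n}\operp M_1^{\operp n}=M_n\operp M_n$ under the canonical identification $B_1^{\otimes 2n}=B_1^{\otimes n}\otimes B_1^{\otimes n}$. Writing out the $\operp$-product, the $(i,j)$-entry of $M_1^{\operp 2n}$ is $m^{(2n)}_{ij}$, while that of $M_1^{\operp n}\operp M_1^{\operp n}$ is $\sum_{k}m^{(n)}_{ik}\otimes m^{(n)}_{kj}$, and associativity of the tensor product makes these coincide. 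Assembling the three equalities gives
\[
\phi_{2n}\big(P(M_{\infty})\big)=P(M_{2n})=P\big(M_1^{\operp 2n}\big)=P\big(M_n\operp M_n\big)=(\phi_n\otimes\phi_n)\big(P(M_{\infty}\operp M_{\infty})\big),
\]
as required.

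There is essentially no hard step here; the proof is a bookkeeping argument. The only two points deserving a word of care are that $\phi_n\otimes\phi_n$ is a genuinely well-defined bounded $^*$-homomorphism on $B_{\infty}\otimes B_{\infty}$ — which is guaranteed by the identification of the seminorm $g$ with the minimal tensor norm in Lemma \ref{lem:g_equals_the_norm_on_B_infty_otimes_b_infty} — and that the grouping of tensor legs implicit in $B_1^{\otimes 2n}=B_1^{\otimes n}\otimes B_1^{\otimes n}$ is applied consistently on both sides, so that the associativity of $\operp$ can be read off entry by entry.
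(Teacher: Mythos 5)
Your proposal is correct and follows essentially the same route as the paper: both reduce the identity to the observation that $\phi_{2n}$ and $\phi_n\otimes\phi_n$ send the substituted generators to $m^{(2n)}_{ij}$ and $\sum_k m^{(n)}_{ik}\otimes m^{(n)}_{kj}$ respectively, and then identify these entrywise via $M_1^{\operp 2n}=M_1^{\operp n}\operp M_1^{\operp n}$, i.e.\ associativity of $\operp$ coming from associativity of $\otimes$. Your extra remark about the well-definedness of $\phi_n\otimes\phi_n$ on $B_\infty\otimes B_\infty$ is harmless but not needed here, since $P(M_\infty\operp M_\infty)$ lives in the algebraic tensor product.
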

\begin{proof}
Starting with the \(^*\)-polynomial \(P\), we obtain the left side of Equation \ref{eqn:applying_phi_n_to_the_polynomial_p_a.s.o.} by replacing \(X_{ij}\) by \(m^{(2n)}_{ij}\) and the right side by replacing it by \(\displaystyle\sum_{k=1}^{N} m^{(n)}_{ik}\otimes m^{(n)}_{kj}\).
Equality of both sides follows from the associativity of the \(\operp\)-product which in turn follows from the associativity of the tensor product:
It holds for \(1\le i,j\le N\)
\newline
\scalebox{0.86}{
\parbox{150mm}{
\begin{align*}
m^{(2n)}_{ij}&=(M_1^{\raisebox{-1pt}{\operp} (2n)})_{ij}\\
&\sum_{t_1,\ldots,t_{2n-1}=1}^{4} m_{it_1}^{(1)}\otimes\ldots\otimes m_{t_{2n-1}j}^{(1)}\\
&=
\sum_{t_n=1}^{4}\left(\left(\sum_{t_1,\ldots t_{n-1}=1}^{4}m_{it_1}^{(1)}\otimes\ldots\otimes m_{t_{n-1}t_n}^{(1)}
\right)\otimes\left(
\sum_{t_{n+1},\ldots,t_{2n-1}=1}^{4}m_{t_nt_{n+1}}^{(1)}\otimes\ldots\otimes m_{t_{2n-1}j}^{(1)}\right)\right)\\
&=\sum_{k=1}^{4}\left(M_1^{\operp n}\right)_{ik}\otimes\left(M_1^{\operp n}\right)_{kj}\\
&=\sum_{k=1}^{4}m^{(n)}_{ik}\otimes m^{(n)}_{kj}.
\end{align*}
}
}

\end{proof}

\begin{thm}\label{thm:B_infty_defines_CMQG}
The \(C^*\)-algebra \(B_{\infty}\) together with its matrix of generators \(M_{\infty}=\big(m_{ij}^{(\infty)}\big)_{1\le i,j\le N}\) defines a compact matrix quantum group \(G=(B_{\infty},M_{\infty})\).
\end{thm}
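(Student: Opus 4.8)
The plan is to verify the two defining conditions of Definition \ref{defn:CMQGs} for the object $(B_\infty,M_\infty)\in\mathcal{C}_N$. The first condition—that $M_\infty$ is unitary and $M_\infty^{(*)}$ is invertible—is immediate from what has already been recorded above: $M_\infty$ is a magic unitary, and a magic unitary is automatically unitary, since the projections in any fixed column are mutually orthogonal and sum to $\mathds{1}$, giving $M_\infty M_\infty^*=M_\infty^* M_\infty=\mathds{1}$. Moreover $M_\infty^{(*)}=M_\infty$ because the entries $m^{(\infty)}_{ij}$ are self-adjoint, so $M_\infty^{(*)}$ is invertible as well. Thus the entire content of the theorem lies in producing the comultiplication, and that is where I would concentrate.

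For the comultiplication I would first define $\Delta$ on the dense $*$-subalgebra of $B_\infty$ spanned by the $*$-polynomials $P(M_\infty)$ by the rule $\Delta\big(P(M_\infty)\big):=P(M_\infty\operp M_\infty)$; on generators this is precisely $\Delta(m^{(\infty)}_{ij})=\sum_{k=1}^N m^{(\infty)}_{ik}\otimes m^{(\infty)}_{kj}$, the formula demanded in \eqref{eqn:structure_of_comultiplication}. As was already pointed out in the text, the whole construction reduces to the norm inequality
\[
\norm{P(M_\infty)}_{B_\infty}\ge \norm{P(M_\infty\operp M_\infty)}_{B_\infty\otimes B_\infty}
\]
for every $*$-polynomial $P$. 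This single estimate does two things at once: applied to $P-Q$ it shows the assignment is well-defined (if $P(M_\infty)=Q(M_\infty)$ then $P(M_\infty\operp M_\infty)=Q(M_\infty\operp M_\infty)$), and it shows $\Delta$ is contractive on the dense subalgebra, hence extends to a $*$-homomorphism $\Delta:B_\infty\to B_\infty\otimes B_\infty$ into the minimal tensor product.

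The inequality itself I would obtain by chaining the preparatory lemmas. Using Lemma \ref{lem:g_equals_the_norm_on_B_infty_otimes_b_infty} to identify $\norm{\cdot}_{B_\infty\otimes B_\infty}$ with $g=\sup_{n}\norm{(\phi_n\otimes\phi_n)(\cdot)}_{B_n\otimes B_n}$, and then Lemma \ref{lem:(phi_n_otimes_phi_n)_circ_Delta_gives_entries_of_M_2n} to replace $(\phi_n\otimes\phi_n)\big(P(M_\infty\operp M_\infty)\big)$ by $\phi_{2n}\big(P(M_\infty)\big)$, one computes
\begin{align*}
\norm{P(M_\infty\operp M_\infty)}_{B_\infty\otimes B_\infty}
&=\sup_{n\in\N}\norm{(\phi_n\otimes\phi_n)\big(P(M_\infty\operp M_\infty)\big)}_{B_n\otimes B_n}\\
&=\sup_{n\in\N}\norm{\phi_{2n}\big(P(M_\infty)\big)}_{B_{2n}}\\
&\le\sup_{m\in\N}\norm{\phi_{m}\big(P(M_\infty)\big)}_{B_m}
=\norm{P(M_\infty)}_{B_\infty},
\end{align*}
where the last equality is the description of the inverse-limit norm as the increasing supremum $\sup_m\norm{\phi_m(\cdot)}_{B_m}$ established in the proof of Lemma \ref{lem:existence_of_inverse_limits}, and the inequality is merely passing from the subsequence of even indices to all of $\N$.

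Finally I would record that $\Delta$ is unital: because $M_\infty$ is magic, $\sum_{j}\Delta(m^{(\infty)}_{ij})=\sum_{k}m^{(\infty)}_{ik}\otimes\big(\sum_{j}m^{(\infty)}_{kj}\big)=\mathds{1}\otimes\mathds{1}$, so $\Delta(\mathds{1})=\mathds{1}\otimes\mathds{1}$. With the comultiplication in hand and the unitarity already settled, all requirements of Definition \ref{defn:CMQGs} are met and $G=(B_\infty,M_\infty)$ is a compact matrix quantum group. I do not expect a real obstacle at the level of the theorem: the only nontrivial ingredient is the norm estimate, which has been front-loaded into Lemmas \ref{lem:g_equals_the_norm_on_B_infty_otimes_b_infty} and \ref{lem:(phi_n_otimes_phi_n)_circ_Delta_gives_entries_of_M_2n}, so the argument is essentially the assembly above; the one point to keep straight is that $B_\infty\otimes B_\infty$ must be the \emph{minimal} tensor product in order for Lemma \ref{lem:g_equals_the_norm_on_B_infty_otimes_b_infty} to be applicable.
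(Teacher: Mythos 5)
Your proposal is correct and follows essentially the same route as the paper: reduce everything to the norm inequality $\norm{P(M_\infty)}_{B_\infty}\ge\norm{P(M_\infty\operp M_\infty)}_{B_\infty\otimes B_\infty}$ and establish it by combining Lemma \ref{lem:g_equals_the_norm_on_B_infty_otimes_b_infty}, Lemma \ref{lem:(phi_n_otimes_phi_n)_circ_Delta_gives_entries_of_M_2n}, the isometric inclusion $B_{2n}\subseteq B_n\otimes B_n$, and the description of $\norm{\cdot}_{B_\infty}$ as the increasing limit of $\norm{\phi_n(\cdot)}_{B_n}$. The paper in fact obtains equality along the even subsequence (hence the isometry of $\Delta$), whereas you only record the inequality, but this is the same argument; your additional remarks on unitarity of $M_\infty$ and unitality of $\Delta$ are routine and consistent with the text.
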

\begin{proof}
As mentioned above, the only thing left to prove is the existence of a \(^*\)-homomorphism \(\Delta:B_{\infty}\rightarrow B_{\infty}\otimes B_{\infty}\) fulfilling 
\[\Delta(m_{ij}^{(\infty)})=\sum_{k=1}^{4}m_{ik}^{(\infty)}\otimes m_{kj}^{(\infty)}\]
and this can be guaranteed by proving the inequality
\begin{equation}\label{eqn:desired_inequality}
\norm{P(M_{\infty})}_{B_{\infty}}\ge \norm{P(M_{\infty}\operp M_{\infty})}_{B_{\infty}\otimes B_{\infty}}\end{equation}
for all \(^*\)-polynomials \(P\) as described above.
Due to the fact that the sequence \(\norm{\phi_n(\cdot)}_{B_n}\) is not only bounded but also increasing and its limit defines the norm \(\norm{\cdot}_{B_{\infty}}\), it holds
\[
\norm{P(M_{\infty})}_{B_{\infty}}
=\lim_{n\rightarrow\infty}\norm{\phi_{n}\big(P(M_{\infty})\big)}_{B_n}
=\lim_{n\rightarrow\infty}\norm{\phi_{2n}\big(P(M_{\infty})\big)}_{B_{2n}}.
\]
Using the inclusion of \(C^*\)-algebras \(B_{2n}\subseteq B_n\otimes B_n\) together with Lemma \ref{lem:(phi_n_otimes_phi_n)_circ_Delta_gives_entries_of_M_2n} and Lemma \ref{lem:g_equals_the_norm_on_B_infty_otimes_b_infty}, we conclude
\begin{align*}
\lim_{n\rightarrow\infty}\norm{\phi_{2n}\big(P(M_{\infty})\big)}_{B_{2n}}
&=\lim_{n\rightarrow\infty}\norm{\phi_{2n}\big(P(M_{\infty})\big)}_{B_n\otimes B_n} \\
&=\lim_{n\rightarrow\infty}\norm{\big(\phi_{n}\otimes \phi_n\big)\big(P(M_{\infty}\operp M_{\infty})\big)}_{B_n\otimes B_n}\\
&=g\big(P(M_{\infty}\operp M_{\infty})\big)\\
&= \norm{P(M_{\infty}\operp M_{\infty})}_{B_{\infty}\otimes B_{\infty}}
\end{align*}

Hence, Inequality \ref{eqn:desired_inequality} is true and \(G:=(B_{\infty},M_{\infty})\) is a compact matrix quantum group.
\end{proof}
The proof of Theorem \ref{thm:B_infty_defines_CMQG} even shows that the comultiplication \(\Delta\) on the compact matrix quantum group \(G=(B_{\infty},M_{\infty})\) is isometric.
%
%
%
Moreover,  the diagram
\[\big(C(S_N^+),u_{S_N^+}\big)\overset{\varphi_{\infty}}{\longrightarrow}\big(B_{\infty},M_{\infty}\big)\overset{\phi\,\circ\,\phi_1}{\xlongrightarrow{\quad\quad}}\big(C(S_N),u_{S_N}\big)\]
further shows that the constructed compact matrix quantum group lies in between the quantum permutation group and its classical analogue, compare Definition \ref{defn:subgrouprelation}.
\begin{cor}
The compact matrix quantum group \(G=(B_{\infty},M_{\infty})\) from Theorem \ref{thm:B_infty_defines_CMQG} fulfils
\[S_N\subsetneq G\subseteq S_N^+.\]
\end{cor}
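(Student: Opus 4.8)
The plan is to read the two-sided inclusion off the diagram already displayed just before the statement, and to concentrate the actual work on the strict inclusion. By Definition \ref{defn:subgrouprelation}, the relation \(S_N\subseteq G\subseteq S_N^+\) holds as soon as we exhibit arrows
\[\big(C(S_N^+),u_{S_N^+}\big)\overset{\varphi_\infty}{\longrightarrow}\big(B_\infty,M_\infty\big)\overset{\phi\circ\phi_1}{\longrightarrow}\big(C(S_N),u_{S_N}\big),\]
and both are already in hand: \(\varphi_\infty\) exists because \(M_\infty\) is a magic unitary (Lemma \ref{lem:operp-product_gives_again_magic_unitary}), while \(\phi\circ\phi_1\) is the composite of the inverse-limit arrow \(\phi_1:B_\infty\to B_1\) (satisfying \(\phi_1(m^{(\infty)}_{ij})=m^{(1)}_{ij}\)) with the arrow \(\phi:B_1\to C(S_N)\) from Lemma \ref{lem:proving_all_properties_of_(B_1,M_1)}. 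Hence the only point needing proof is that the right-hand arrow is \emph{not} injective, which is precisely the extra claim \(S_N\subsetneq G\).

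To get non-injectivity I would argue purely by commutativity. The target \(C(S_N)\) is abelian, so any \(^*\)-homomorphism out of a noncommutative \(C^*\)-algebra kills every commutator and is therefore not injective. Thus it suffices to show that \(B_\infty\) is noncommutative. Since \(\phi_1\) sends generators to generators, commutativity of \(B_\infty\) would force the images \(m^{(1)}_{ij}\) to commute, hence \(B_1\) to be commutative; so it is enough to prove that \(B_1\) itself is noncommutative.

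For the noncommutativity of \(B_1\) I would produce a \(^*\)-homomorphism from \(B_1\) onto the noncommutative building block \(A=C^*(p,q)\) of Definition \ref{defn:R_(a,b),(c,d)}. Recall from Definition \ref{defn:(B_1,M_1)} that \(M_1\) is a \(\operp\)-product of the matrices \(R_{(a,b),*}\), one tensor leg per \(\operp\)-factor, living inside \(A^{\otimes LN(N-1)/2}\). I would single out the leg carrying one factor \(R_{(1,2),(c,d)}\) (possible since \(N\ge 4\)), apply the identity on that leg, and apply the character \(\mu_0:A\to\C\) determined by \(p,q\mapsto 1\) (as in the proof of Lemma \ref{lem:proving_all_properties_of_(B_1,M_1)}) on every other leg. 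As \(\mu_0\) sends each \(R_{(a,b),*}\) to the identity permutation matrix \(\mathds{1}\), and \(M\operp\mathds{1}=M\otimes 1\), this slice map collapses all other factors and sends \(M_1\) to \(R_{(1,2),(c,d)}\). Restricted to \(B_1\) it is a \(^*\)-homomorphism onto \(C^*\big(\text{entries of }R_{(1,2),(c,d)}\big)=C^*(p,q)=A\); the noncommutativity \(pq\neq qp\) then transports back to \(B_1\), hence to \(B_\infty\), so \(\phi\circ\phi_1\) cannot be injective and the inclusion is strict.

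The only genuine content is this witness for noncommutativity, and it is also the one delicate point. One cannot simply name two entries of \(M_1\): the magic-unitary relations force most products of entries to vanish, so distinct entries tend to commute for trivial reasons. The clean device is the character-collapse homomorphism above — equivalently, one recovers noncommuting ``\(p\otimes 1\)'' and ``\(q\otimes 1\)'' type elements inside a common leg by forming partial row sums such as \(m^{(1)}_{i1}+m^{(1)}_{i2}\) and \(m^{(1)}_{ic}+m^{(1)}_{id}\). The one verification to carry out carefully is that collapsing all remaining legs via \(\mu_0\) really returns the single factor \(R_{(1,2),(c,d)}\) and not some degenerate matrix, which is exactly the computation \(M\operp\mathds{1}=M\otimes 1\) together with \(\mu_0(R_{(a,b),*})=\mathds{1}\).
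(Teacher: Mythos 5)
Your proof is correct and follows the same route as the paper: the two-sided inclusion is read off the displayed diagram of arrows $\varphi_\infty$ and $\phi\circ\phi_1$, and strictness is reduced to noncommutativity of $B_\infty$, hence of $B_1$, which is incompatible with invertibility of a map onto the commutative $C(S_N)$. The only difference is that the paper simply asserts that noncommutativity of $A$ forces that of $B_1$ and $B_\infty$, whereas you actually justify this via the character-collapse slice map (identity on one leg, $\mu_0$ on all others, using $\mu_0(R_{(a,b),*})=\mathds{1}$ and $M\operp\mathds{1}=M\otimes 1$) exhibiting a surjection $B_1\to A$ --- a worthwhile detail, since $B_1$ is only a subalgebra of $A^{\otimes LN(N-1)/2}$ and the implication is not completely automatic; your slice-map argument is the reliable one here (the parenthetical partial-row-sum alternative would need more care, as sums like $m^{(1)}_{i1}+m^{(1)}_{i2}$ can collapse to $1$).
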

\begin{proof}
The only thing left to show is the inequality \(S_N\neq G\).
As the \(C^*\)-algebra \(A\) from Equation \ref{eqn:defn_of_A} is non-commutative, so are \(B_1\) and \(B_{\infty}\), hence the arrow
\[(B_{\infty},M_{\infty})\overset{\phi\,\circ\,\phi_1}{\xlongrightarrow{\quad\quad}}\big(C(S_N),u_{S_N}\big)\]
cannot be invertible.
\end{proof}
It is a long standing conjecture (see for instance \cite{banicauniformquantumgroups}) that the inclusion \(S_N\subseteq S_N^+\) is maximal, i.e. there is no compact matrix quantum group strictly in between them.
\begin{conjecture}\label{conj:inclusion_S_N_S_N^+_maximal}
For all \(N\!\in\!\N_{\ge 4}\), there is no quantum group $G$ with \(S_N\subsetneq G\subsetneq S_N^+\).
\end{conjecture}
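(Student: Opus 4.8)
This statement is a well-known open problem, so what follows is a strategy together with a description of exactly where it stalls, rather than a complete proof. The natural framework is Woronowicz's Tannaka--Krein duality: a compact matrix quantum group $G$ with $S_N\subseteq G\subseteq S_N^+$ corresponds to a concrete rigid tensor category of intertwiners $\mathcal{C}_G=\big(\Hom_G(u^{\otimes k},u^{\otimes l})\big)_{k,l}$, and the inclusions of quantum groups translate into reversed inclusions $\mathcal{C}_{S_N^+}\subseteq\mathcal{C}_G\subseteq\mathcal{C}_{S_N}$ of categories. Since the intertwiner spaces of $S_N^+$ are spanned by the noncrossing partitions $NC$ and those of $S_N$ by all partitions $P$, the plan is to classify every intermediate tensor category sandwiched in this way and to show that each must coincide with one of the two endpoints; by the duality this would force $G\in\{S_N,S_N^+\}$, contradicting $S_N\subsetneq G\subsetneq S_N^+$.

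First I would dispose of the \emph{easy} case, i.e.\ assume $\mathcal{C}_G$ is itself spanned by a category of partitions $\mathcal{D}$ with $NC\subseteq\mathcal{D}\subseteq P$. Here the statement becomes purely combinatorial: a category of partitions strictly larger than $NC$ must contain a genuine crossing, and from a single basic crossing the category operations (tensor product, composition, rotation) generate all of $P$, so that $\mathcal{D}\in\{NC,P\}$. This is precisely the type of lattice result made available by the Banica--Speicher and Raum--Weber classification of categories of partitions, and it settles the conjecture under the easiness hypothesis.

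The decisive obstacle, and the reason the conjecture is left open for $N\ge 6$, is that an arbitrary intermediate $G$ need not be easy: its category $\mathcal{C}_G$ is only known to have morphism spaces lying between the span of the noncrossing and the span of all partitions, without being spanned by partitions at all. Excluding such ``exotic'' intermediate categories amounts to controlling \emph{all} rigid tensor subcategories of the representation category of $S_N$ that contain that of $S_N^+$, and for growing $N$ this is genuinely harder: once partitions have more than $N$ blocks the associated intertwiners satisfy linear relations, so the two endpoint categories are no longer the free ones and the clean combinatorial lattice argument is unavailable. For $N=4,5$ the relevant morphism spaces are small enough that the analysis can be completed by hand, which is the content of \cite{banicauniformquantumgroups}; the explicit candidate $G=(B_\infty,M_\infty)$ built in this paper, for which even the equality $G=S_N^+$ cannot currently be decided when $N\ge 6$, illustrates the same difficulty from the constructive side.
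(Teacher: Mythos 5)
You have correctly diagnosed the situation: the statement you were asked to prove is stated in the paper as a \emph{conjecture}, not a theorem. The paper supplies no proof of it; it only records that the cases $N=4$ and $N=5$ are settled in \cite{banicauniformquantumgroups} and that the general case is open. So there is no ``paper's own proof'' to compare against, and your proposal --- which is explicitly a strategy sketch plus an honest account of where it stalls --- cannot be faulted for failing to close an open problem. Within those limits, your description is accurate. The Tannaka--Krein reduction is the standard framing: intermediate quantum groups $S_N\subseteq G\subseteq S_N^+$ correspond to tensor categories squeezed between the span of all partitions and the span of the noncrossing ones. Your treatment of the easy case is also correct: a category of partitions containing all of $NC$ and at least one crossing partition can be capped down to the basic crossing $\crosspart$, and $NC$ together with $\crosspart$ generates all of $P$, so the only easy quantum groups in the interval are the endpoints.

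The one place where you could be more precise is in isolating \emph{why} the non-easy case resists. It is not only that exotic tensor categories need not be spanned by partitions; it is that for $N\ge 6$ the linear maps $T_p$ associated to partitions $p$ with up to $N$ blocks are linearly independent in low degree but acquire relations in higher degree, so an intermediate category is a priori an arbitrary family of subspaces compatible with composition, tensoring and conjugation, and no combinatorial lattice controls it. This is exactly the gap that \cite{banicauniformquantumgroups} closes by hand for $N=4,5$ and that the present paper's construction of $(B_\infty,M_\infty)$ probes from the other direction: if one could show $(B_\infty,M_\infty)\neq S_N^+$ for some $N\ge 6$, the conjecture would be \emph{refuted} rather than proved. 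In short: no gap in your reasoning beyond the gap that exists in the literature itself, and your proposal should be read as a correct statement of the problem's status rather than as a proof.
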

This has been proved in \cite{banicauniformquantumgroups} for the cases \(N=4\) and \(N=5\).
Exploiting this, we obtain the following result and question.
\begin{cor}
For \(N\!\in\!\{4,5\}\) the compact matrix quantum group \(G=(B_{\infty},M_{\infty})\) from Theorem \ref{thm:B_infty_defines_CMQG} equals \(S_N^+\).
\end{cor}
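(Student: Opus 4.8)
The plan is to derive the statement immediately from the preceding corollary together with the maximality of the inclusion \(S_N\subseteq S_N^+\). First I would recall that the previous corollary establishes \(S_N\subsetneq G\subseteq S_N^+\), where \(G=(B_\infty,M_\infty)\) is a compact matrix quantum group by Theorem \ref{thm:B_infty_defines_CMQG}; the strictness \(S_N\subsetneq G\) comes from the noncommutativity of \(B_\infty\), which in turn stems from the noncommutativity of the algebra \(A\) of two free projections. Thus \(G\) is a genuine compact matrix quantum group lying strictly above \(S_N\) and admitting an arrow from \(S_N^+\).

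Next I would invoke Conjecture \ref{conj:inclusion_S_N_S_N^+_maximal}, which is a theorem of \cite{banicauniformquantumgroups} precisely in the cases \(N\in\{4,5\}\): there is no compact matrix quantum group \(H\) with \(S_N\subsetneq H\subsetneq S_N^+\). Applying this maximality with \(H=G\) and using that we already know \(S_N\subsetneq G\), the only possibility compatible with \(G\subseteq S_N^+\) is \(G=S_N^+\); the alternative \(G\subsetneq S_N^+\) is excluded. This closes the argument.

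The corollary is therefore essentially formal, and its entire weight is carried by its two inputs. The first is Theorem \ref{thm:B_infty_defines_CMQG}, which is needed to guarantee that \(G\) genuinely carries a comultiplication, so that Banica's classification applies to it as a quantum group rather than merely to some \(C^*\)-algebraic model; without this the maximality result could not even be quoted. The second is the deep classification of \cite{banicauniformquantumgroups} itself, where the real mathematical difficulty lies. I would emphasize that the reasoning is confined to \(N\in\{4,5\}\): for \(N\ge 6\) the maximality of \(S_N\subseteq S_N^+\) is still only conjectural, so this route is unavailable and the equality \(G=S_N^+\) cannot be concluded by the same means — which is exactly why that range must be left open.
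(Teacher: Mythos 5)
Your argument is exactly the one the paper intends: combine the preceding corollary \(S_N\subsetneq G\subseteq S_N^+\) with the maximality of the inclusion \(S_N\subseteq S_N^+\) proved in \cite{banicauniformquantumgroups} for \(N\in\{4,5\}\) to force \(G=S_N^+\). The paper leaves this step implicit ("Exploiting this, we obtain the following result"), and your write-up correctly supplies it, including the observation that Theorem \ref{thm:B_infty_defines_CMQG} is what licenses applying the maximality result to \(G\) as a quantum group.
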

\begin{question}\label{conjecture:nothing_in_between_S_N_and_S_N^+}
Is \(G=(B_{\infty},M_{\infty})\) equal to \(S_N^+\) for every \(N\!\in\!\N_{\ge 4}\)?
This is trivial if Conjecture \ref{conj:inclusion_S_N_S_N^+_maximal} is true.
\end{question}

Moreover, we are wondering whether the inverse system $(B_n,M_n)$ is stationary at some point (we believe this is not the case). We phrase it as the following question.

\begin{question}\label{QuestZwei}
Are there polynomials $P_n$ in the generators $u_{ij}\in C(S_N^+)$ such that $\varphi_n(P_n)=0$ for $\varphi_n:C(S_N^+)\to B_n$, but $\varphi_{n+1}(P_n)\neq 0$?
\end{question}

We believe that such polynomials exist although we cannot prove it. Such a sequence $(P_n)_{n\in\N}$ would show that  none of the maps $\varphi_n$ is injective. Note that (at least for $N=4$ and $N=5$) the models $(B_n,M_n)$ approximate $C(S_N^+)$ completely, hence the $\operp$ operation applied on such simple matrices as in Definition \ref{defn:(B_1,M_1)} or Example \ref{ex:R_from-introduction} is powerful enough to reproduce $C(S_N^+)$ eventually. In the case that Question \ref{QuestZwei} is answered affirmatively, one can produce infinitely many mutually different quantum permutation matrices using the $\operp$ operation.
\section{Generalization to easy quantum groups}
Orthogonal easy quantum groups have been defined for the first time in \cite{banicaspeicherliberation} and they have been generalized in \cite{tarragoweberclassificationpartitions} and \cite{tarragoweberclassificationunitaryQGs} to unitary easy quantum groups.
This section is aimed for readers familiar with easy quantum groups and we refer to the references above for more details.
The notions of \emph{(two-coloured) partitions} and \emph{quantum group relations}, see below, are adopted from \cite{jungweber_PQS}.
\vspace{11pt}
\newline
The definition of easy quantum groups is based on Tannaka-Krein duality, see \cite{woronowicztannakakrein}, saying that there is a one-to-one correspondence between compact matrix quantum groups and their intertwiner spaces. 
To define an easy quantum group, one starts with a so-called category of two-coloured partitions (of sets) and associated to it a collection of intertwiner spaces (which defines a compact matrix quantum group).
\newline
In this work, however, we reduce the theory of easy quantum groups to a simple construction: Starting with (suitable) sets \(\Pi\) of partitions, one can associate to every partition \(p\in\Pi\) a collection of \emph{quantum group relations \(R_p(u)\)} on the canonical generators of a compact matrix quantum group \(C\big(G_N(\Pi)\big)\).
We finish this section by generalizing the result of the last section to arbitrary easy quantum groups, compare Proposition \ref{prop:generalization_to_easy_QGs}:
Given for \(N\in\N\) an easy quantum group \((G_N(\Pi),u_{G_N(\Pi)})\) -- which is an object in \(\mathcal{C}_N\) -- and an arrow
\[(G_N(\Pi),u_{G_N(\Pi)})\overset{\varphi_1}{\longrightarrow}(B_1,M_1)\]
the whole construction of the sequence of models \(\big((B_n,M_n)\big)_{n\in\N}\) as in Section \ref{sec:a_sequence_of_models} is possible.
If there is in addition an arrow
\[(B_1,M_1)\overset{\nu}{\longrightarrow}\big(\C,\mathds{1}_{M_N(\C)}\big)\]
the construction of the inverse system
\[
(B_1,M_1)\overset{\pi_{2,1}}{\xlongleftarrow{\quad\quad}}\cdots \overset{\pi_{n,n-1}}{\xlongleftarrow{\quad\quad}}(B_n,M_n) \overset{\pi_{n+1,n}}{\xlongleftarrow{\quad\quad}}(B_{n+1},M_{n+1})\overset{\pi_{n+2,n+1}}{\xlongleftarrow{\quad\quad}}\cdots\quad
\]
as in Section \ref{sec:a_sequence_of_models} and \ref{sec:the_limit_object} is possible and its inverse limit \((B_{\infty},M_{\infty})\) is well-defined and gives a compact matrix quantum subgroup of \(G_N(\Pi)\).
\begin{defn}
Consider \(N\!\in\!\N\) and let \(u\!=\!(u_{ij})_{1\le i,j\le N}\) be an \(N\!\times\!N\)-matrix of generators.
Given \(k,l\in\N_0\), we associate with a partition \(p\in\mathcal{P}(k,l)\) the \(^*\)-algebraic relations
\begin{equation}\label{eqn:quantum_group_relations_long_form_prelims}
\sum_{t\in[N]^k}\delta_p(t,\gamma')u_{t_1\gamma_1}^{\omega_1}\cdots u_{t_k\gamma_k}^{\omega_k}=\sum_{t'\in[N]^l}\delta_p(\gamma,t')u_{\gamma'_1t'_1}^{\omega'_1}\cdots u_{\gamma'_lt'_l}^{\omega_l}
\end{equation}
for all \(\gamma\in[N]^k\) and \(\gamma'\in[N]^l\) on the matrix entries \(u_{ij}\) and call them the \emph{quantum group relations} \(\mathcal{R}_p^{Gr}(u)\) associated to \(p\) and \(u\).
\end{defn}
We now state the definition of easy quantum groups as formulated in \cite{jungweber_PQS}.
\begin{defn}\label{defn:easy_QG}
Consider \(N\!\in\!\N\) and Let \(\Pi\) be a set of partitions such that it contains \(\{\paarpartwb, \paarpartbw, \baarpartwb, \baarpartbw\}\subseteq \Pi\).
Then we call the universal \(C^*\)-algebra
\[A:=C^*\big(\,(u_{ij})_{1\le i,j\le N}\,|\,\forall p\in\Pi:\textnormal{ The relations \(\mathcal{R}_p^{Gr}(u)\) hold.}\,\big)\]
the \emph{non-commutative functions over the easy quantum group \(G_N(\Pi)\)}.
Analogous to the definition of compact matrix quantum groups, we write \(A\!=\!C\big(G_N(\Pi)\big)\), \(u\!=\!u_{G_N(\Pi)}\) as well as \(G_N(\Pi)\!=\!(A,u)\!=\!\big(C(G_N(\Pi)),u_{G_N(\Pi)}\big)\).
\end{defn}
\begin{rem}\label{rem:rem_on_defn_of_easy_QGs}
Note, that the relations associated to the partitions\linebreak \(\{\paarpartwb, \paarpartbw, \baarpartwb, \baarpartbw\}\) exactly say that \(u\) and \(u^{(*)}\) are both unitary matrices, compare \cite{tarragoweberclassificationunitaryQGs}.
This guarantees that the universal \(C^*\)-algebra \(A\) is well-defined.
It can be checked by straightforward computation, see also Remark \ref{rem:comment_on_prop:generalization_to_easy_QGs} and the proof of Proposition \ref{prop:generalization_to_easy_QGs}, that a quantum group relation \(\mathcal{R}_p^{Gr}(u)\) is preserved by the symbolwise replacement
\[u_{ij}\overset{\Delta}{\longmapsto} \sum_{k=1}^{N}u_{ik}\otimes u_{kj}.\]
Hence, any easy quantum group is a compact matrix quantum group indeed.
\end{rem}
\begin{ex}
Starting with
\[\Pi=\{\paarpartwb, \paarpartbw, \baarpartwb, \baarpartbw,\singletonw,\,
\setlength{\unitlength}{0.5cm}
\begin{picture}(2.3,1.5)
\put(0,-0.3) {$\circ$}
\put(0.6,-0.3) {$\circ$}
\put(1.2,-0.3) {$\circ$}
\put(1.8,-0.3) {$\circ$}
\put(0.2,0.2){\line(0,1){0.5}}
\put(0.8,0.2){\line(0,1){0.5}}
\put(1.4,0.2){\line(0,1){0.5}}
\put(2.0,0.2){\line(0,1){0.5}}
\put(0.2,0.7){\line(1,0){1.8}}
\end{picture}
\vspace{4pt}
\,\}\]
the construction of \(G_N(\Pi)\) due to Definition \ref{defn:easy_QG}  gives the compact matrix quantum group \(S_N^+\) as defined in Definition \ref{defn:S_N^+}.
\end{ex}
\begin{prop}\label{prop:generalization_to_easy_QGs}
Consider \(N\in\N\) and a set of partitions
\[\Pi\supseteq\{\paarpartwb, \paarpartbw, \baarpartwb, \baarpartbw\}.\]
Whenever there exists in the category \(\mathcal{C}_N\) an object \((B_1,M_1)\) and arrows
\[(C(G_N(\Pi),u_{G_N(\Pi)})\overset{\varphi_1}{\longrightarrow}(B_1,M_1)\overset{\nu}{\longrightarrow}(\C,\mathds{1}_{M_N(\C)}),\]
the formula
\[M_{n+1}:=M_n\operp M_1\]
defines in \(\mathcal{C}_N\) an inverse system
\[
(B_1,M_1)\overset{\pi_{2,1}}{\xlongleftarrow{\quad\quad}}\cdots \overset{\pi_{n,n-1}}{\xlongleftarrow{\quad\quad}}(B_n,M_n) \overset{\pi_{n+1,n}}{\xlongleftarrow{\quad\quad}}(B_{n+1},M_{n+1})\overset{\pi_{n+2,n+1}}{\xlongleftarrow{\quad\quad}}\cdots\quad
\]
where \(\pi_{n+1,n}\) is the restriction of \(\id_{B_1}^{\otimes n}\otimes \nu\) to \(B_{n+1}\).
The inverse limit 
\[(B_{\infty},M_{\infty})=:\lim_{\infty\leftarrow n}(B_n,M_n)\]
exists and it defines a compact matrix quantum group \(G\subseteq G_N(\Pi)\).
\end{prop}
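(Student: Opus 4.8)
The plan is to reproduce the construction of Sections \ref{sec:a_sequence_of_models} and \ref{sec:the_limit_object} almost verbatim; the only genuinely new ingredient is that the defining relations of an easy quantum group are stable under the \(\operp\)-product. I would therefore begin with this stability statement: if \(M\in M_N(B)\) and \(M'\in M_N(B')\) both satisfy the quantum group relations \(\mathcal{R}_p^{Gr}\) for a partition \(p\), then so does \(M\operp M'\in M_N(B\otimes B')\). This is the general-\(N\), two-algebra version of the computation recorded in Remark \ref{rem:rem_on_defn_of_easy_QGs} (the case \(M'=M\), \(B'=B\)): the bookkeeping with \(\delta_p\) and the colour pattern factors into a statement about the first tensor leg and one about the second, so the same calculation goes through when the two legs come from different matrices, each satisfying \(\mathcal{R}_p^{Gr}\). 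Since \(\varphi_1\) being an arrow forces \(M_1=(\id\otimes\varphi_1)u_{G_N(\Pi)}\) to satisfy \(\mathcal{R}_p^{Gr}\) for every \(p\in\Pi\), an induction on \(n\) using \(M_{n+1}=M_n\operp M_1\) shows that every \(M_n=M_1^{\operp n}\) satisfies \(\mathcal{R}_p^{Gr}\), \(p\in\Pi\). In particular, because \(\{\paarpartwb,\paarpartbw,\baarpartwb,\baarpartbw\}\subseteq\Pi\), each \(M_n\) and each \(M_n^{(*)}\) is unitary, so every entry \(m^{(n)}_{ij}\) has norm at most \(1\).

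Next I would set up the inverse system exactly as in Section \ref{subsec:the_matrices_M_n}. Writing \(m^{(n+1)}_{ij}=\sum_{k=1}^{N}m^{(n)}_{ik}\otimes m^{(1)}_{kj}\in B_n\otimes B_1\) and using \((\id\otimes\nu)M_1=\mathds{1}_{M_N(\C)}\), i.e. \(\nu(m^{(1)}_{kj})=\delta_{kj}\), one gets \((\id_{B_1}^{\otimes n}\otimes\nu)(m^{(n+1)}_{ij})=\sum_k m^{(n)}_{ik}\delta_{kj}=m^{(n)}_{ij}\). Hence the restriction of \(\id_{B_1}^{\otimes n}\otimes\nu\) to \(B_{n+1}\) sends the entries of \(M_{n+1}\) onto those of \(M_n\) and is an arrow \(\pi_{n+1,n}\colon(B_{n+1},M_{n+1})\to(B_n,M_n)\) in \(\mathcal{C}_N\), so the diagram is indeed an inverse system. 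Since the entries are bounded by the previous paragraph, Lemma \ref{lem:existence_of_inverse_limits} applies and the inverse limit \((B_\infty,M_\infty)\) exists, together with the arrows \(\phi_n\colon(B_\infty,M_\infty)\to(B_n,M_n)\).

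I would then transfer the relations to the limit. By the construction in Lemma \ref{lem:existence_of_inverse_limits} the norm on \(B_\infty\) is \(\sup_n\norm{\phi_n(\cdot)}_{B_n}\), so the family \((\phi_n)_n\) separates points of \(B_\infty\); consequently any \(^*\)-polynomial in the \(m^{(\infty)}_{ij}\) whose image vanishes in every \(B_n\) is already zero in \(B_\infty\). Applying this to each relation in \(\mathcal{R}_p^{Gr}\), which holds for every \(M_n\), shows that \(M_\infty\) satisfies \(\mathcal{R}_p^{Gr}\) for all \(p\in\Pi\). In particular \(M_\infty\) and \(M_\infty^{(*)}\) are unitary, so by the universal property of \(C(G_N(\Pi))\) there is an arrow \(\varphi_\infty\colon(C(G_N(\Pi)),u_{G_N(\Pi)})\to(B_\infty,M_\infty)\), which is exactly the inclusion \(G\subseteq G_N(\Pi)\) in the sense of Definition \ref{defn:subgrouprelation}.

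Finally, that \(G=(B_\infty,M_\infty)\) is a compact matrix quantum group follows by running the proof of Theorem \ref{thm:B_infty_defines_CMQG} unchanged: its supporting results, Lemmas \ref{lem:linear_independence_is_reached_before_limit}, \ref{lem:g_is_c*-norm}, \ref{lem:g_equals_the_norm_on_B_infty_otimes_b_infty} and \ref{lem:(phi_n_otimes_phi_n)_circ_Delta_gives_entries_of_M_2n}, use only the structure of the inverse system (increasing, jointly faithful seminorms) and the associativity of \(\operp\) giving \(M_{2n}=M_n\operp M_n\), none of which is special to \(S_N^+\) or to magic unitaries. They produce the isometric comultiplication \(\Delta(m^{(\infty)}_{ij})=\sum_k m^{(\infty)}_{ik}\otimes m^{(\infty)}_{kj}\), and together with the unitarity of \(M_\infty\) and \(M_\infty^{(*)}\) this gives Definition \ref{defn:CMQGs}. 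I expect the main obstacle to be the very first step, the stability of \(\mathcal{R}_p^{Gr}\) under \(\operp\) for arbitrary \(p\); everything afterwards is either the inverse-limit bookkeeping already carried out in the special case or a direct consequence of the joint faithfulness of the \(\phi_n\). Morally this stability is the same computation that makes the comultiplication of an easy quantum group well-defined, which is why I would isolate it as a preliminary lemma.
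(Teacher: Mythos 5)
Your proposal is correct and follows essentially the same route as the paper: the paper's proof is precisely an induction on \(n\) whose inductive step is the \(\delta_p\)-bookkeeping computation showing that \(\mathcal{R}_p^{Gr}(M_n)\) and \(\mathcal{R}_p^{Gr}(M_1)\) together imply \(\mathcal{R}_p^{Gr}(M_n\operp M_1)\) — exactly the two-algebra stability lemma you isolate — with the inverse-system and limit machinery deferred to Sections \ref{sec:a_sequence_of_models} and \ref{sec:the_limit_object} just as you do. The additional details you spell out (boundedness of entries via unitarity, transfer of the relations to \(B_\infty\) via the jointly faithful \(\phi_n\)) are exactly the "aforementioned considerations" the paper invokes implicitly.
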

\begin{proof}
By the aforementioned considerations it only remains to show that every \((B_n,M_n)\) defines a model of \(C\big(G_N(\Pi)\big)\), i.e. there exists an arrow
\[(C\big(G_N(\Pi)\big),u_{G_N(\Pi)})\overset{\varphi_n}{\longrightarrow}(B_n,M_n).\]
By the universal property of \(C\big(G_N(\Pi)\big)\) we only have to show that the relations \(\big(\mathcal{R}_p^{Gr}(M_n)\big)_{p\in\Pi}\) hold.
We use induction on \(n\in\N\).
\newline
For \(n\!=\!1\) the statement is true by assumption, more precisely, by existence of \(\varphi_1\).
Now assume the claim to be proved for some \(n\in\N\).
To simplify notation, we write 
\[M_n=(x_{ij})_{1\le i,j\le N}\quad\quad\textnormal{and}\quad\quad M_1=(y_{ij})_{1\le i,j\le N}.\]
Recall that the quantum group relations \(\mathcal{R}_p^{Gr}(u)\) read as
\[
\sum_{t\in[N]^k}\delta_p(t,\gamma')u_{t_1\gamma_1}^{\omega_1}\cdots u_{t_k\gamma_k}^{\omega_k}=\sum_{t'\in[N]^l}\delta_p(\gamma,t')u_{\gamma'_1t'_1}^{\omega'_1}\cdots u_{\gamma'_lt'_l}^{\omega_l}
\]
for all \(\gamma\in[N]^k\) and \(\gamma'\in[N]^l\).
Using repeatedly that these relations are fulfilled for the \(x_{ij}\)'s and the \(y_{ij}\)'s, we can directly check the respective quantum group relations for the matrix
\[M_{n+1}=M_n\operp M_1=\left(\sum_{s=1}^{N}x_{is}\otimes y_{sj}\right)_{1\le i,j\le N}
.\]
Given \(\gamma\in[N]^k\) and \(\gamma'\in[N]^l\), we compute
{\allowdisplaybreaks
\begin{align*}
&\sum_{t\in[N]^k}\sum_{s\in[N]^k}\delta_p(t,\gamma')x_{t_1s_1}^{\omega_1}\cdots x_{t_ks_k}^{\omega_k}\otimes y_{s_1\gamma_1}^{\omega_1}\cdots y_{s_k\gamma_k}^{\omega_k}\\
=&
\sum_{s\in[N]^k}\left(\sum_{t\in[N]^k}\delta_p(t,\gamma')x_{t_1s_1}^{\omega_1}\cdots x_{t_ks_k}^{\omega_k}\right)\otimes y_{s_1\gamma_1}^{\omega_1}\cdots y_{s_k\gamma_k}^{\omega_k}\\
=&
\sum_{s\in[N]^k}\left(\sum_{t'\in[N]^l}\delta_p(s,t')x_{\gamma'_1t'_1}^{\omega'_1}\cdots x_{\gamma'_lt'_l}^{\omega'_l}\right)\otimes y_{s_1\gamma_1}^{\omega_1}\cdots y_{s_k\gamma_k}^{\omega_k}\\
=&
\sum_{t'\in[N]^l}x_{\gamma'_1t'_1}^{\omega'_1}\cdots x_{\gamma'_lt'_l}^{\omega'_l}\otimes \left(\sum_{s\in[N]^k}\delta_p(s,t')y_{s_1\gamma_1}^{\omega_1}\cdots y_{s_k\gamma_k}^{\omega_k}\right)\\
=&
\sum_{t'\in[N]^l}x_{\gamma'_1t'_1}^{\omega'_1}\cdots x_{\gamma'_lt'_l}^{\omega'_l}\otimes \left(\sum_{s'\in[N]^l}\delta_p(\gamma,s')y_{t'_1s'_1}^{\omega'_1}\cdots y_{t'_ls'_l}^{\omega'_l}\right)\\
\overset{s'\leftrightarrow t'}{=}\!\!&
\sum_{t'\in[N]^l}\sum_{s'\in[N]^l}\delta_p(\gamma,t')x_{\gamma'_1s'_1}^{\omega'_1}\cdots x_{\gamma'_ls'_l}^{\omega'_l}\otimes y_{s'_1t'_1}^{\omega'_1}\cdots y_{s'_lt'_l}^{\omega'_l}.
\end{align*}
}
These are the relations \(\mathcal{R}^{Gr}_p(M_{n+1})\) and the proof is finished.
\end{proof}
\begin{rem}\label{rem:comment_on_prop:generalization_to_easy_QGs}
The computation in the proof of Proposition \ref{prop:generalization_to_easy_QGs} is exactly the one that proves existence of the comultiplication  \(\Delta\) on \(C\big(G_N(\Pi)\big)\), compare item (ii) of Remark \ref{rem:rem_on_defn_of_easy_QGs}:
The relations $R_p^{Gr}(u)$ imply $R_p^{Gr}(u\operp u)$.
\end{rem}

Let us end this article with a number of questions. As formulated in Question \ref{conjecture:nothing_in_between_S_N_and_S_N^+}, we do not know whether or not the quantum group $(B_{\infty},M_{\infty})$ from Theorem \ref{thm:B_infty_defines_CMQG} coincides with $S_N^+$ for general $N$. If it doesn't, it would be a very interesting object to study and it would answer Conjecture \ref{conj:inclusion_S_N_S_N^+_maximal} to the negative. Moreover, we have no understanding on the dependence on the initial matrix $M_1$; recall that the construction of $(B_{\infty},M_{\infty})$ is completely determined by the matrix $M_1$ -- Roland Speicher raised the question, whether different matrices $M_1$ yield different quantum groups $(B_{\infty},M_{\infty})$. A positive answer would solve Question \ref{conjecture:nothing_in_between_S_N_and_S_N^+} and disprove Conjecture \ref{conj:inclusion_S_N_S_N^+_maximal}.

Moreover, Theorem \ref{thm:B_infty_defines_CMQG} shows, that the tensor product construction $\operp$ is powerful enough to eventually produce an interesting quantum group (or to reproduce $S_N^+$). In Question \ref{QuestZwei}, we ask whether one has to apply $\operp$ infinitely many times or whether a finite application is sufficient for obtaining the limit object. Related is a question that Adam Skalski pointed out to us: For $N=4$, one may think of our matrix $M_1$  as being obtained from $S_2^+ * S_2^+$ (note however, that $S_2^+=S_2$, of course). What happens if one applies a similar construction to $O_2^+ * O_2^+$ in order to produce quantum groups between $O_4$ and $O_4^+$? (Or more generally, using $G_n * G_m$ for obtaining quantum subgroups of $G_{n+m}$, where  $(G_n)_{n\in\N}$ is any series of quantum groups.) In particular, it would be interesting to see whether the half-liberated quantum group $O_n^*$ may be constructed in this way, and whether one may produce an example $O_n^*\subsetneq G\subsetneq O_n^+$ whose existence is also an open question. These questions are related to the concept of topological generation of quantum groups from certain quantum subgroups.

\bibliography{Models_of_SN+}
\bibliographystyle{alpha}

\end{document}